\definecolor{mypink3}{cmyk}{0, 0.7808, 0.4429, 0.1412} 
\theoremstyle{plain}
\newtheorem{theorem}{Theorem}[section]
\newtheorem{lemma}[theorem]{Lemma}
\newtheorem{proposition}[theorem]{Proposition}
\newtheorem{proposition/definition}[theorem]{Proposition/Definition}
\newtheorem{corollary}[theorem]{Corollary}
\theoremstyle{definition}
\newtheorem{definition}[theorem]{Definition}
\newtheorem{example}[theorem]{Example}
\newtheorem{obs}[theorem]{Obs}
\theoremstyle{remark}
\newtheorem{remark}[theorem]{Remark}
\newcommand{\boxs}{\overline{\dim}_B}
\newcommand{\boxi}{\underline{\dim}_B}
\newcommand{\udim}{\overline{\dim}_\theta}
\newcommand{\ldim}{\underline{\dim}_\theta}
\newcommand{\udimp}{\overline{\dim}_\phi}
\newcommand{\wig}[1]{\widetilde{#1}}
\newcommand{\dgl}{\underline{\dim}^\Phi}
\newcommand{\dgu}{\overline{\dim}^\Phi}
\newcommand{\de}{\delta}
\newcommand{\te}{\theta}
\newcommand{\norm}[1]{ \lVert {#1} \rVert}
\newcommand{\Norm}[1]{\left \lVert {#1} \right \rVert}
\newcommand{\supp}[1]{\operatorname{supp} \left ( #1 \right )}
\newcommand{\Rn}{\mathbb{R}^n}
\newcommand{\dimlocs}[1]{ \overline{\dim}_{\text{loc} } #1}
\newcommand{\dimloci}[1]{ \underline{\dim}_{\text{loc} } #1}
\newcommand{\dimloc}[1]{\dim_{\text{loc}} #1}
\NewDocumentCommand{\pspace}{m O{\Rn}}{\ensuremath{L^{#1} \left ( #2 \right )}}
\NewDocumentCommand{\pnorm}{m O{p}}{\norm{#1}_{#2}}
\NewDocumentCommand{\pNorm}{m O{p}}{\Norm{#1}_{#2}}
\newcommand\rest[2]{{
  \left.\kern-\nulldelimiterspace 
  #1 
  \vphantom{\big|} 
  \right|_{#2} 
  }}
\title[Intermediate dimensions of measures]{Intermediate dimensions of measures: \\{\Small Interpolating between Hausdorff and Minkowski dimensions}}
\date{\today}
\author{Nicolas E. Angelini} \address{Universidad Nacional de San Luis \\Facultad de Ciencias Fisico Matematicas y Naturales\\Departamento de Matematica\\ and \\CONICET \\Instituto de Matematica Aplicada\\ San Luis (IMASL)}
\email{nicolas.angelini.2015@gmail.com}
\author{Ursula M. Molter}
\address{Universidad de Buenos Aires\\ Facultad de Ciencias Exactas y Naturales\\ Departamento de Matematica \\and\\ UBA-CONICET\\ Instituto de Investigaciones Matematicas\\ Luis A. Santalo (IMAS)}
\email{umolter@conicet.gov.ar}
\author{Jose M. Tejada}
\address{Universidad de Buenos Aires\\ Facultad de Ciencias Exactas y Naturales\\ Departamento de Matematica \\and\\ UBA-CONICET\\ Instituto de Investigaciones Matematicas\\ Luis A. Santalo (IMAS)}
\email{josemtejada1999@gmail.com}
\begin{document}

\begin{abstract}
In this paper, we define a family of dimensions for Borel measures that lie between the Hausdorff and Minkowski dimensions for measures, analogous to the intermediate dimensions of sets. Previously, Hare et.~al.~in \cite{HARE_2020} defined families of dimensions that interpolate between the Minkowski and Assouad dimensions for measures. Additionally, Fraser, in \cite{FraserFourier} introduced an additional family of dimensions that interpolate between the Fourier and Sobolev dimensions of measures. Our results address a "gap" in the study of dimension interpolation for measures, almost completing the spectrum of intermediate dimensions for measures: from Hausdorff to Assouad dimensions.

Furthermore, Theorem \ref{teoprincipal} can be interpreted as a "reverse Frostman" lemma for intermediate dimensions. We also obtain a capacity-theoretic definition that enables us to estimate the intermediate dimensions of pushforward measures by projections.
\end{abstract}

\maketitle

\noindent\emph{Mathematics Subject Classification 2020}: primary: 28A80; secondary: 28A78.

\noindent\emph{Key words and phrases}: Hausdorff dimension, Minkowski dimension, intermediate dimensions, dimension interpolation.

\section{Introduction}

In the last few years, dimension interpolation techniques have been developed with the goal of understanding deeper relations between pairs of dimensions. These ideas were developed for sets in \cite{falconer2020intermediate}, \cite{survey}, \cite{Banaji}, and \cite{HAREsets}, among others. However, for the classic set dimensions that these papers study there exist also the notions of dimensions for measures.
Because of this, it is natural to think about defining intermediate dimensions for measures. 

Interpolation between different pairs of dimensions for measures can be found in 
\cite{FraserFourier} (between the Fourier and Sobolev dimensions) and \cite{HARE_2020} (between the Minkowski and Assouad dimensions).

In our case, we will interpolate between the upper Hausdorff and the Minkowski dimension of measures (Minkowski dimensions for measures were introduced in \cite{Falconerminkowski}. For precise definitions of these dimensions see section~\ref{defs}).  Given that our dimensions lie between the upper Hausdorff and Minkowski dimensions, our notion almost fills the 'gap' that these other families of dimensions do not cover. 

Interestingly, many theorems for sets can be rewritten for measures, and as we will see in this paper, their set version can sometimes be deduced directly from the corresponding measure version, using a result similar to Theorem~2.1 from \cite{Falconerminkowski}. 

Our main theorem can also be seen as a sort of 'inverse Frostman lemma' for intermediate dimensions.

Let $\dim_H (\mu)$ and $ \boxs (\mu)$ denote the Hausdorff and Box dimensions, respectively, where $\boxs (\mu)$ is defined in \cite{Falconerminkowski}. The definition of intermediate dimensions that we give is based on four properties that we would like them to satisfy. 

\begin{enumerate}
    \item {$\udim (\mu) \leq \overline{\dim}_\phi (\mu)$ for all $0 \leq \theta \leq \phi \leq 1$.}
    \item {$\udim (\mu) \leq \boxs (\mu)$.}
    \item {$\dim_H (\mu)\leq \udim (\mu)$.}
    \item {For every compact set $E$, \\ $\udim(E) = \min \{\udim (\mu) : \mu \text{ finite and } \supp\mu = E \}$}.
\end{enumerate}


In Sections \ref{AAAB}, we provide an alternative definition of the intermediate dimension of a measure in terms of capacities associated with certain kernels. Building on this, we show that the intermediate dimensions of the image of a measure \( \mu \) under the orthogonal projection onto almost all \( m \)-dimensional subspaces depend only on \( m \) and \( \mu \). In other words, they are almost surely independent of the choice of subspace. Finally, in section \ref{section6} we state an additional result regarding projections of measures, and the behavior of the dimensions of projections when $\te\rightarrow 0$.

\section{Definitions}\label{defs}

Unless specified, the measures that we study will always be finite, Borel regular (in $\Rn$ for some $n$) with compact support. However by  Remark~\ref{obssopcomp}, requiring compact support is not really a restriction. By $B(x ,r)$ we mean the closed balls in $\Rn$, and by $\mathbbm{1}_{A}$, the indicator function of $A$.

Before defining the intermediate dimensions, we recall some preliminary concepts.

\begin{definition}
    the \textbf{upper and lower local dimensions of $\mu$ at $x \in \Rn$} are defined as:
    $$\dimlocs{\mu} (x) := \limsup_{r \to 0} \frac{\log \mu (B(x,r))}{\log r} \:\:\:\:,\:\:\:\: \dimloci{\mu} (x) := \liminf_{r \to 0} \frac{\log \mu (B(x,r))}{\log r}.$$
    In case they are equal, their common value is written as $\dimloc \mu (x)$.
\end{definition}

It is easy to see that for $x \notin \supp{\mu}$, $\dimloc{\mu}(x) = \infty$, and if $x$ is an atom of $\mu$, then $\dimloc{\mu}(x)=0$. More information about these two dimensions can be found in \cite{Falconertechniques}. 

\begin{definition}

The \textbf{upper Hausdorff dimension} of $\mu$ is defined by:

\begin{align*}
    \dim_H^* (\mu) &=\inf \{ s: \dimloci{\mu} (x) \leq s \: \text{ for } \mu-\text{almost every } x \}. \\
    & = \inf \{ \dim_H(E) : E \text{ Borel } / \: \mu(\Rn \setminus E)=0 \},
\end{align*}

and the \textbf{Hausdorff dimension} of $\mu$ is defined by:

\begin{align*}
    \dim_H (\mu) &=\sup \{ s: \dimloci{\mu} (x) \geq s \: \text{ for } \mu-\text{almost every } x \}. \\
    & = \inf \{ \dim_H(E) : E \text{ Borel } / \: \mu(E)>0 \}.
\end{align*}

\end{definition}

Note that,  $\dim_H (\mu) \leq \dim_H^* (\mu)$ and the inequality can be strict (see \cite[Prop. 10.12]{Falconertechniques}).

\begin{definition} \label{defdimmink}
    Following \cite{Falconerminkowski}, we define the \textbf{Upper and lower Minkowski dimensions} for measures by:
    \medskip

        $ \boxs (\mu) := \inf \Set{  s \geq 0 : 
        \begin{array}{c}
        \: \exists \: c>0 \text{ such that, for all } \:  x \in \supp{\mu} \text{ and } r \in (0, 1):
        \\*[1mm]
        \: \mu (B(x, r)) \geq c \cdot r^s 
        \end{array}  } $.
        
        \medskip
        
        $ \boxi (\mu) := \inf \Set{  s \geq 0 : 
        \begin{array}{c}
        \: \exists c>0 \text{ and } (r_n)_{\mathbb{N}} \subseteq \mathbb{R}_{\geq0} \text{ with } r_n \to 0, \text{ such that:  } \\*[1mm]
        \: \mu (B(x, r_n)) \geq c \cdot r_{n}^s \: \forall x \in \supp{\mu} \text{ and } n \in \mathbb{N}
        \end{array}  } $.
        
\end{definition}

\begin{remark} \label{obssopcomp}

    For measures with unbounded support, $\boxs(\mu) = \boxi(\mu) = \infty$, because one can take some fixed $r>0$, and a sequence $x_n \rightarrow \infty$, such that  $B(x_n, r)$ has positive measure, and $B(x_m, r) \cap B (x_n, r) = \emptyset$. As the measure is finite, the measures of such balls must decrease to $0$, and so no $s>0$ would satisfy the desired inequalities in the previous definition. Consequently, one does not lose much considering only compactly supported measures.
\end{remark} 

In order to motivate our definitions, the following observation can be established first:

\begin{lemma} \label{obsminkdelta} For every $\mu$,
$$\boxs (\mu) = \inf \{  s \geq 0 : \exists c>0 \text{ and }\delta_0 >0 / \mu (B(x, \delta)) \geq c \cdot \delta^s \: \forall x \in \supp{\mu} \text{ and } \delta< \delta_0  \}.$$
    
\end{lemma}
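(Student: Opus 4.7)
The plan is to prove both inequalities between $\boxs(\mu)$ and the quantity $D(\mu)$ defined by the right-hand side.

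The inequality $D(\mu) \leq \boxs(\mu)$ is immediate: if $s \geq 0$ satisfies the condition of Definition~\ref{defdimmink} (i.e.\ $\mu(B(x,r)) \geq c r^s$ for every $r \in (0,1)$), then the same $s$ satisfies the weaker condition on the right-hand side with $\delta_0 = 1$. Hence every admissible $s$ for $\boxs(\mu)$ is admissible for $D(\mu)$, and taking infima gives the inequality.

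For the reverse inequality $\boxs(\mu) \leq D(\mu)$, the plan is to show that any $s$ admissible for $D(\mu)$ is admissible for $\boxs(\mu)$, possibly after adjusting the constant. Let $s \geq 0$, $c > 0$, and $\delta_0 > 0$ be such that $\mu(B(x,\delta)) \geq c \, \delta^{s}$ for all $x \in \supp{\mu}$ and all $\delta < \delta_0$. We may assume $\delta_0 \leq 1$ (otherwise the condition of Definition~\ref{defdimmink} holds directly). The idea is to split $(0,1)$ into the two intervals $(0, \delta_0/2)$ and $[\delta_0/2, 1)$. On $(0, \delta_0/2)$ the hypothesis applies immediately and gives $\mu(B(x,r)) \geq c \, r^s$. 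On $[\delta_0/2, 1)$ one uses monotonicity of $\mu(B(x,\cdot))$ together with the hypothesis at the fixed scale $\delta_0/2 < \delta_0$ to obtain
\[
\mu(B(x,r)) \;\geq\; \mu\!\left(B(x,\delta_0/2)\right) \;\geq\; c \, (\delta_0/2)^s,
\]
and since $r^s \leq 1$ for $r \in [\delta_0/2, 1)$ and $s \geq 0$, this lower bound dominates $c(\delta_0/2)^s \cdot r^s$.

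Setting $c' = c \cdot \min\{1,(\delta_0/2)^s\}$, we conclude that $\mu(B(x,r)) \geq c' \, r^s$ for all $x \in \supp{\mu}$ and all $r \in (0,1)$, so $s$ is admissible for $\boxs(\mu)$. Taking infima over admissible $s$ yields $\boxs(\mu) \leq D(\mu)$, completing the proof. No serious obstacle is expected; the only mildly subtle point is remembering that $r^s \leq 1$ on the intermediate range thanks to $r < 1$ and $s \geq 0$, which is what allows the fixed lower bound $c(\delta_0/2)^s$ to be rewritten as a polynomial bound in $r$.
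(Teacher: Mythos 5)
Your proof is correct and takes essentially the same approach as the paper: the trivial direction by comparing defining conditions, and the nontrivial direction by using monotonicity of $r \mapsto \mu(B(x,r))$ to extend the polynomial lower bound from the small scales covered by the hypothesis to all $r \in (0,1)$, at the cost of adjusting the constant. The only cosmetic difference is that you anchor the monotonicity step at the scale $\delta_0/2$ while the paper anchors it at $\delta_0$; yours sidesteps a trivial strictness issue ($\delta<\delta_0$ versus $\delta\le\delta_0$) but the argument is otherwise identical.
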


\begin{proof} 

Clearly,
$$\boxs (\mu) \geq \inf \{  s \geq 0 : \exists c>0 \text{ and }\delta_0 >0 / \mu (B(x, \delta)) \geq c \cdot \delta^s \: \forall x \in \supp{\mu} \text{ and } \delta< \delta_0  \}.$$
To prove the non-trivial inequality, suppose that $s$ satisfies the right hand side of the equality. 
Let $ r \in (\delta_0, 1)$ and $ x \in \supp{\mu}$. Then, 

$$ \delta_0^s \cdot c \cdot r^s \leq \delta_0 ^s \cdot c \leq \mu (B(x, \delta_0)) \leq \mu (B(x, r)).$$

Because of this, choosing $\tilde c = \min(c, c \delta_0^s)$ we have $s\geq \boxs(\mu)$. 
\end{proof}

Before proceeding with intermediate dimensions, we need one final lemma, which will be necessary later. The next definition and lemma, along with a discussion of these topics, can be found in \cite{Banaji}.

\begin{definition}
Let $y \in (0, \infty)$. A function $\Phi: A \subseteq \mathbb{R} \rightarrow \mathbb{R}$ with $A$ containing an interval $(0, y)$ is said to be \textbf{y-admissible} if:
\begin{itemize}
    \item $0 < \Phi(\delta)\leq \delta \: $for all $ \delta \in (0, y).$
    \item $\displaystyle{\frac{\Phi(\delta)}{ \delta}} {\rightarrow} 0$ when $\delta \rightarrow 0^+.$
    \item $\Phi$ is non-decreasing in $(0, y)$.
\end{itemize}
$\Phi$ is called \textbf{admissible} if the exists $y$ such that $\Phi$ is $y$-admissible.
\end{definition}

\begin{remark} \label{obsthetanot01}
For all $\theta \in (0, 1)$, excluding $\theta = 0\: \text{and}\:  1$ the function $\Phi(\delta) = \delta^{1/\theta}$ is admissible. 
\end{remark}

\begin{definition} \label{defdimintgener}
Let $\Phi$ be admissible, and $\emptyset \neq E \subseteq \Rn$ a totally bounded set. The \textbf{generalised intermediate dimensions} of $E$ are defined as:
\medskip

     $\dgu (E) := \inf \Set{s \geq 0 |
    \begin{array}{c}
    \forall \varepsilon > 0 \: \exists\: \delta_0> 0 / \forall \: 0<\delta< \delta_0 \: \exists \left \{ U_i \right \} \text{covering of } E \\*[1mm]
    \text{ such that } \Phi(\delta) \leq |U_i| \leq \delta \: \forall i \: , \: \sum |U_i|^s < \varepsilon 
    \end{array} }.$

\medskip

    $\dgl(E) := \inf \Set{s \geq 0 |
    \begin{array}{c}
    \forall \varepsilon > 0,\delta_0 > 0 \: \exists \: \delta \in (0, \delta_0) \text{ and } \left \{ U_i \right \} \text{ a covering of } E \\*[1mm]
    \text{ such that } \Phi(\delta) \leq |U_i| \leq \delta \: \forall i \: , \: \sum |U_i|^s < \varepsilon
     \end{array}  }.$
\vspace{1mm}

If $\dgl (E) = \dgu(E)$, we write $\dim^\Phi (E)$. 
\end{definition}
    
When the function $\Phi(\delta)$ is taken to be $\Phi(\delta) = \delta^{1/\theta}$, one obtains the $\theta$-intermediate dimensions, as defined in \cite{survey}. Even though the function $\Phi = 0$ is not admissible, one can still use it to recover the Hausdorff dimension of the set $E$ (which is not a generalised intermediate dimension, as one can prove that $\dgu (E) = \dgu (\overline{E})$). It is also important to note that the Minkowski dimension can be understood as a generalised intermediate dimension, choosing an appropriate $\Phi$ (which is not $\Phi(\delta) := \delta$ since it is not admissible). 

We will need the following lemma later.

\begin{lemma} \label{observacióndesigualdadtrivial}
Let $\Phi$ be admissible and $E \subseteq \Rn$ bounded. Suppose there exist $C ,\: \de_0>0 $, such that for every $\de < \de_0$ one can find a covering $\set{U_i}$ of $E$ such that $\Phi(\delta) \leq |U_i| \leq \delta$ and $\sum |U_i|^s \leq C$. Then, $\dgu(E)\leq s$.
\end{lemma}
\begin{proof} 

In order to prove the desired inequality it is enough to conclude that for each $ \eta >0$, $\dgu (E)< s + \eta$. For this, let $\varepsilon>0$, and take:
$$\de_1 = \min\set{ \de_0, (\varepsilon/c)^{1/\eta}}.$$

Now, for all $\de < \de_1 < \delta_0$, there exists $\set{U_i}$ a covering of $E$ with $\Phi(\delta) \leq |U_i| \leq \delta$ and $\sum |U_i|^s \leq C$. Then, we have:

$$\sum |U_i|^{s + \eta} = \sum |U_i|^s |U_i|^{\eta} \leq \sum |U_i|^s\de^\eta \leq C \de^\eta < C \de_1^\eta < \varepsilon.$$

Therefore,  $\dgu(E) \leq s + \eta \: $ for all $ \: \eta >0$, and $\dgu(E) \leq s$.
\end{proof}

\begin{remark}

Lemma \ref{observacióndesigualdadtrivial} remains valid for $\dim_H$ and $\boxs$, even though $\Phi = 0$ and $\Phi (\delta) = \delta$ are not admissible.

\end{remark}

\begin{remark} \label{condsufidimint}
One can formulate a version of the lemma for lower dimensions:

Suppose that there exists $ \: c > 0$ such that for every $\de_0$ there exists $\de < \de_0$ and a covering $\set{U_i}$ with $\sum|U_i|^s\leq c$ and $\Phi(\de) \leq |U_i|\leq \de$. Then, $\dgl(E) \leq s$.




\end{remark}

\section{Intermediate dimensions for measures}

Interestingly, given that intermediate dimensions of sets are defined by imposing restrictions on the diameters of sets belonging to certain coverings, one can define these concepts for measures by taking restrictions on the diameter of balls centered on points in the support of the measure. To be precise: 

\begin{definition} \label{defdimintmed}
    Let $\theta \in [0, 1]$. The \textbf{$\theta$-intermediate dimensions} of $\mu$ are defined by:
    
    \medskip
    

        $\udim (\mu) := \inf \Set{s \geq 0:
        \begin{array}{c}
         \: \exists \: c>0 \text{ and } \: \delta_0 > 0 / \text{ for } \mu-\text{a.e. } x \in \supp{\mu} \text{ we have: }\\*[1mm]
         \forall \: \delta<\delta_0  \: \: \exists \: r \in [\delta^{1/\theta}, \delta] \text{ with } \: \mu (B(x, r)) \geq c \cdot r^s
         \end{array} }.$

\medskip


         $\ldim (\mu) := \inf \Set{s \geq 0:
        \begin{array}{c}
         \: \exists \: c>0 \text{ and } \: \delta_n \to 0 / \text{ for } \mu-\text{a.e. } x \in \supp{\mu},\: \text{ we have: } \\*[1mm]
        \forall \: n \in \mathbb{N} \: \exists \: r \in [\delta_n^{1/\theta}, \delta_n] \text{ with } \: \mu (B(x, r)) \geq c \cdot r^s
         \end{array} }.$

\medskip

For the case $\theta=0$, we take $\de^{1/\theta}=0$ in the above definition. Also, notice that the $r$ in the above definition depends on the element $x$ - it is not uniform.

In Section \ref{defgeneral} we will briefly discuss an analogue of the generalised intermediate dimensions for measures. It is also clear that we can replace the intervals $[\delta^{1/\theta}, \delta]$ with $[\delta, \delta^\theta]$ when $\theta >0$.

\end{definition}

\begin{remark}
Clearly, these dimensions satisfy the next inequalities:
\begin{enumerate}
    \item $\udim (\mu) \leq \overline{\dim}_\phi$ for all  $0 \leq \theta \leq \phi \leq 1$.
    \item $\udim (\mu) \leq \boxs (\mu)$.
    \item $\overline{\dim}_0 (\mu)=\underline{\dim}_0 (\mu)=\dim_H^* (\mu).$
\end{enumerate}
\end{remark}

It is not very clear that $\overline{\dim}_1 (\mu) = \boxs (\mu)$, given that in the definition of intermediate dimensions, we require certain conditions to be true for almost all $x$. However, as we shall see, we will be able to replace ``${\rm almost\ all}\: x$" with ``${\rm for\ all} \: x$".

\begin{lemma}\label{elctpesunparatodo}

Let $\theta \in (0,1]$. Then:

    \medskip
    
         $\udim (\mu) := \inf \Set{s \geq 0:
        \begin{array}{c}
         \: \exists \: c>0 \text{ and } \: \delta_0 > 0 / \forall \: \delta < \delta_0 \text{ and } x \in \supp{\mu},\:\\*[1mm]
         \exists \: r \in [\delta^{1/\theta}, \delta] \text{ with } \: \mu (B(x, r)) \geq c \cdot r^s
         \end{array} }.$
 
\medskip

$\ldim (\mu) := \inf \Set{s \geq 0:
\begin{array}{c}
\: \exists \: c>0 \text{ and } \: \delta_n \to 0 / \forall \: n \in \mathbb{N} \text{ and } x \in \supp{\mu},\:\\*[1mm]
\exists \: r \in [\delta_n^{1/\theta}, \delta_n] \text{ with } \: \mu (B(x, r)) \geq c \cdot r^s
\end{array} }.$
\end{lemma}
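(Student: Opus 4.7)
The plan is to show that the ``for all $x \in \supp\mu$'' form of each infimum is equivalent to the ``$\mu$-a.e.\ $x$'' form by approximating arbitrary support points by points where the condition is known. One direction is immediate: any $(c,\delta_0)$ (respectively $(c,\{\delta_n\})$) witnessing the ``for all $x\in\supp\mu$'' condition also witnesses the ``$\mu$-a.e.'' condition with the same constants, so the infimum in Definition \ref{defdimintmed} is at most the infimum in Lemma \ref{elctpesunparatodo}. All the content lies in the reverse inequality.

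First I would record the key topological observation: if $N\subseteq\Rn$ satisfies $\mu(N)=0$, then $\supp\mu\setminus N$ is dense in $\supp\mu$. Indeed, any open $V$ meeting $\supp\mu$ has $\mu(V)>0$, hence $\mu(V\setminus N)>0$, and because $\mu(\Rn\setminus\supp\mu)=0$ one obtains $V\cap(\supp\mu\setminus N)\neq\emptyset$. Fix now $s$ in the ``a.e.'' set with constants $c,\delta_0$, let $N$ be the exceptional $\mu$-null set, pick any $x\in\supp\mu$, and choose $x_k\to x$ with $x_k\in\supp\mu\setminus N$. For any $\delta<\delta_0$, each $x_k$ comes with a radius $r_k\in[\delta^{1/\theta},\delta]$ satisfying $\mu(B(x_k,r_k))\geq c\, r_k^s$; by compactness of $[\delta^{1/\theta},\delta]$, I pass to a subsequence for which $r_k\to r\in[\delta^{1/\theta},\delta]$.

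To close the argument, for each $\varepsilon>0$ we have $B(x_k,r_k)\subseteq B(x,r+\varepsilon)$ eventually (since $x_k\to x$ and $r_k\to r$), hence $\mu(B(x,r+\varepsilon))\geq c\, r_k^s\to c\, r^s$; continuity of the finite measure $\mu$ from above on the nested intersection $\bigcap_{\varepsilon>0}B(x,r+\varepsilon)=B(x,r)$ then gives $\mu(B(x,r))\geq c\, r^s$, which is exactly the ``for all $x$'' inequality with the \emph{same} constants $c,s,\delta_0$. The lower-dimension statement follows by running precisely the same approximation with the prescribed sequence $\delta_n$ held fixed, extracting for each $n$ its own subsequence of the $x_k$ along which $r_k\to r_n\in[\delta_n^{1/\theta},\delta_n]$. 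I expect the only real subtlety to be the simultaneous passage to the limit in centers and radii, controlled precisely by the compactness of the radius interval and the right-continuity of $r\mapsto\mu(B(x,r))$ for closed balls; once this is in place the two formulations agree with identical constants, and both equalities in the lemma follow.
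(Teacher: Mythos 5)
Your proof is correct, and it takes a genuinely different route from the paper's. The paper argues pointwise and geometrically: given $x\in\supp\mu$ and $\delta<\delta_0$, it sets $\widetilde\delta=\delta/2$, picks (using, implicitly, the same density observation you make explicit) a point $y\in A$ with $d(x,y)\leq\widetilde\delta^{1/\theta}$, uses the containment $B(y,\widetilde r)\subseteq B(x,2\widetilde r)$ to transfer the lower bound from $y$ to $x$, and then splits into cases on whether $2\widetilde r$ lands in $[\delta^{1/\theta},\delta]$, replacing the constant $c$ by $c/2^{s/\theta}$ along the way. You instead approximate $x$ by a sequence $x_k\to x$ in $A$, extract a convergent subsequence of the associated radii $r_k$ using compactness of $[\delta^{1/\theta},\delta]$, and pass to the limit via eventual inclusion $B(x_k,r_k)\subseteq B(x,r+\varepsilon)$ together with continuity from above of the finite measure $\mu$ on the nested closed balls. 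Your argument buys a slightly sharper conclusion, namely that the very same constants $(c,\delta_0)$ work for the ``for all $x$'' formulation, whereas the paper's is more elementary in that it avoids any limiting process (no compactness, no measure continuity) at the cost of an inconsequential degradation of $c$. Note that compactness of the radius interval is precisely where $\theta>0$ enters (for $\theta=0$ the interval would be $(0,\delta]$, non-compact), so your argument correctly exploits the hypothesis $\theta\in(0,1]$; the paper's proof uses the same hypothesis when forming $\widetilde\delta^{1/\theta}$. Both approaches rest on the common key observation, which you state and the paper leaves implicit, that the complement of a $\mu$-null set is dense in $\supp\mu$.
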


\begin{proof}

The inequality $\geq$ is clear. For the nontrivial inequality, suppose that $s>\udim \mu$, so that there exist $c,\ \delta_0>0$ and $A\subseteq\supp\mu$ with $\mu(\supp\mu \setminus A)=0$ such that, for all $x \in A$, we have, that for every $\delta<\delta_0$ there exists $ r \in [\delta^{1/\theta}, \delta]$ such that $\mu (B(x, r)) \geq c \cdot r^s.$

We need to verify that the same remains valid for every $x\in\supp\mu$. Let $x\in\supp\mu$, $\delta<\delta_0$ and $\wig{\delta} := \delta/2$. Then, take $y\in A$ such that $d(x, y)\leq \wig{\delta}^{1/\theta}$. 

By hypothesis, the exists $\wig{r}\in  \left [ \wig{\delta}^{1/\theta}, \wig{\delta} \right ]$ such that:
$$\mu(B(y, r))\geq c \cdot \wig{r}^s.$$
Then, taking $r:= 2\wig{r}$, we have that $B(y, \wig{r})\subseteq B(x, r)$, and so
$$\mu(B(x, r))\geq \mu(B(y, \wig{r})) \geq c\cdot \left ( \frac{r}{2} \right )^s = \frac{c}{2^s}r^s.$$

It is clear that $r \in \left [2  \wig{\delta}^{1/\theta}, 2 \wig{\delta} \right ]= \left [ \frac{2}{2^{1/\theta}}\delta^{1/\theta}, \delta \right ]$. However, we need $ r \in \left [ \delta^{1/\theta}, \delta \right ]$. If
 $r\geq \delta^{1/\theta}$ there is nothing to do.
    
    If $r< \delta^{1/\theta}$ then 
$$\mu(B(x, \delta^{1/\theta}))\geq \mu(B(x, r))\geq \frac{c}{2^s} \cdot r^s \geq \frac{c}{2^s} \cdot \left ( \frac{2}{2^{1/\theta}} \right )^s \left ( \delta^{1/\theta} \right )^s = c \cdot \frac{1}{2^{s/\theta}} \left ( \delta^{1/\theta} \right )^s.$$

Therefore, changing the constant $c$, we have that $x$ satisfies the same condition as $y$ and so the right-hand side in the inequality is smaller than $s$.
\end{proof}

Keeping this result in mind, it is clear that $\overline{\dim}_1 (\mu) = \boxs (\mu)\: , \: \underline{\dim}_1 (\mu) = \boxi (\mu)$, and we can conclude that the intermediate dimensions interpolate between the (upper) Hausdorff and Minkowski dimensions. Whenever $\theta>0$, we will work with the right-hand expressions in this lemma instead of our original definition, because it is a little easier to work with. 

\begin{remark}\label{nopara0}
    For $\theta=0$, lemma \ref{elctpesunparatodo} is false. In fact, one can prove without much difficulty that, if $\overline{\dim}_0' (\mu),  \underline{\dim}_0' (\mu)$ are defined by taking $\theta=0$ in the right-hand expressions given in lemma \ref{elctpesunparatodo}, then:
  \begin{align*}
    \overline{\dim}_0 (\mu) &= \inf \{  s : \: \forall \: x \in \supp{\mu}, \dimloci{\mu} (x) \leq s \} \\
    & = \inf \{  s: \exists c >0 / \: \forall \: x \in \supp{\mu} 
 \: \exists \: r_n \to 0 \text{ such that } \mu (B (x, r_n)) \geq c  r_n^s  \} \\
    &  = \underline{\dim}_0 (\mu).
\end{align*}
\end{remark}







\begin{proposition}\label{musop1}
For every $\theta \in (0, 1]$,
$$\udim (\supp{\mu}) \leq \udim (\mu).$$
\end{proposition}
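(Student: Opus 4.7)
The plan is to turn the pointwise lower bound on $\mu$-ball measures coming from $\udim(\mu)$ into a suitable covering of $\supp{\mu}$ via a $5r$-Vitali selection, and then invoke Lemma \ref{observaciondesigualdadtrivial} applied with $\Phi(\delta) = \delta^{1/\theta}$. Fix $s > \udim(\mu)$. By Lemma \ref{elctpesunparatodo} there exist $c, \delta_0 > 0$ such that for every $\delta < \delta_0$ and every $x \in \supp{\mu}$ one can choose $r_x \in [\delta^{1/\theta}, \delta]$ with $\mu(B(x, r_x)) \geq c\, r_x^s$. Given such $\delta$, the $5r$-covering lemma applied to the family $\{B(x, r_x) : x \in \supp{\mu}\}$ produces a disjoint subfamily $\{B(x_i, r_i)\}_{i \in I}$, with $r_i := r_{x_i}$, such that $\{B(x_i, 5 r_i)\}_{i \in I}$ still covers $\supp{\mu}$. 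Disjointness and finiteness of $\mu$ yield $c\sum_i r_i^s \leq \sum_i \mu(B(x_i, r_i)) \leq \mu(\Rn)$, so $\{B(x_i, 5 r_i)\}_{i \in I}$ is a covering of $\supp{\mu}$ with diameters in $[10\delta^{1/\theta}, 10\delta]$ and with $s$-sum bounded by $10^s\mu(\Rn)/c$.

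The main obstacle is that, in order to apply Lemma \ref{observaciondesigualdadtrivial}, the diameters of the covering sets must lie in a window of the form $[\tilde\delta^{1/\theta}, \tilde\delta]$. Setting $\tilde\delta := 10\delta$ handles the upper bound, but for $\theta < 1$ the lower bound $\tilde\delta^{1/\theta} = 10^{1/\theta}\delta^{1/\theta}$ strictly exceeds the actual lower bound $10\delta^{1/\theta}$, so some balls may be too small. I would fix this by enlarging any offending ball, replacing the radius $5 r_i$ by $R_i := \max\bigl(5 r_i,\, \tilde\delta^{1/\theta}/2\bigr)$. The enlarged balls still cover $\supp{\mu}$, and once $\tilde\delta$ is small enough that $\tilde\delta^{1/\theta} \leq \tilde\delta$, their diameters satisfy $\tilde\delta^{1/\theta} \leq 2 R_i \leq \tilde\delta$.

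The remaining task is to bound the new $s$-sum. By the definition of $R_i$, $\sum_i (2R_i)^s \leq \sum_i (10 r_i)^s + N \cdot \tilde\delta^{s/\theta}$, where $N := |I|$. Since $r_i \geq \delta^{1/\theta}$ for every $i$, the bound $\sum_i r_i^s \leq \mu(\Rn)/c$ forces $N \cdot \delta^{s/\theta} \leq \mu(\Rn)/c$, and hence $N \cdot \tilde\delta^{s/\theta} = 10^{s/\theta} N \delta^{s/\theta} \leq 10^{s/\theta} \mu(\Rn)/c$. Combined with the earlier bound $\sum_i (10 r_i)^s \leq 10^s \mu(\Rn)/c$, the modified covering has $s$-sum uniformly bounded in $\tilde\delta$. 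Lemma \ref{observaciondesigualdadtrivial} then gives $\udim(\supp{\mu}) \leq s$, and letting $s \downarrow \udim(\mu)$ concludes the proof. The case $\theta = 1$ is automatic, since the scale window collapses to $\{\delta\}$ and the enlargement step is unnecessary.
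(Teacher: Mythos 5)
Your proof is correct, and the overall strategy is the same as the paper's: at each $x\in\supp\mu$ pick a ball $B(x,r_x)$ with $\mu(B(x,r_x))\geq c\,r_x^s$ and $r_x\in[\delta^{1/\theta},\delta]$, use a covering lemma to produce a cover of $\supp\mu$ with uniformly bounded $s$-sum, adjust diameters to lie in a window $[\tilde\delta^{1/\theta},\tilde\delta]$, and then invoke Lemma~\ref{observaciondesigualdadtrivial}. The one genuine difference is the covering tool. The paper uses Besicovitch's covering theorem, which yields a subcover of $\supp\mu$ with multiplicity at most $M$, so $\sum\mu(B_i)\leq M\,\mu(\Rn)$ directly bounds $\sum r_i^s$; you use the Vitali $5r$-covering lemma to extract a \emph{disjoint} subfamily (so $\sum\mu(B(x_i,r_i))\leq\mu(\Rn)$) and then inflate the radii by $5$ to recover a cover. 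The $5r$ approach is more elementary and metric-space general, while Besicovitch is Euclidean-specific but avoids the inflation factor; for the dimension estimate either works. Your handling of the diameter window is also slightly different: the paper rescales only the too-small balls by a fixed factor $a=2^{1/\theta}/2$, so the $s$-sum scales by $a^s$, whereas you take $R_i=\max(5r_i,\tilde\delta^{1/\theta}/2)$ and control the extra contribution via the cardinality bound $N\,\delta^{s/\theta}\leq\mu(\Rn)/c$, which follows from $r_i\geq\delta^{1/\theta}$ and the disjointness estimate. Both fixes give a bound uniform in $\tilde\delta$, as Lemma~\ref{observaciondesigualdadtrivial} requires, so the argument closes correctly.
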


\begin{proof}
First note that if $(A_i)_{\mathbb{N}}$ are Borel sets such that  $\sum \mathbbm{1}_{A_i} \leq M$ for some $M \in \mathbb{N}$, then, $\sum \mathbbm{1}_{A_i} \leq M \cdot \mathbbm{1}_{ \cup A_i} $ and therefore, taking integrals and using monotone convergence, we have that $\sum \mu(A_i) \leq M \cdot \mu(\bigcup_i A_i).$


To prove the proposition, take $s>\udim (\mu)$. Using Besicovitch's covering theorem, there exist $c, \delta_0$ and $M$ such that for every $\delta<\delta_0$ there exists a cover $\{B_i=B(x_i, r_i)\}_{\mathbb{N}}$ of $\supp{\mu}$, with the following properties:

$$\sum \mathbbm{1}_{B_i}(x) \leq M \: , \:\: \delta^{1/\theta} \leq r_i \leq \delta \: \text{ and } \: \mu(B_i) \geq c \cdot r_i^s.$$

Also, we can take $\delta_0$ such that for $\delta<\delta_0$ we have $\frac{2^{2/\theta}}{4} \leq \delta^{1 - 1/\theta}$. Then, as $\delta^{1/\theta} \leq r_i \leq \delta$, we obtain $ 2 \delta^{1/\theta} \leq | B_i | \leq 2 \delta$, hence the diameters are not in the 'appropriate' range (which would be $ 2^{1/\theta} \delta^{1/\theta} \leq | B_i | \leq 2 \delta$), so we need to 'enlarge' those $B_i$ that don't fall in the correct range. For this we choose a new covering $\wig{B_i}$ as follows:

If $(2 \delta)^{1/\theta} \leq | B_i | \leq 2 \delta$, we leave $B_i$ as it is, taking $\widetilde{B_i}:= B_i$. Otherwise (that is, if $ 2 \delta^{1/\theta} \leq | B_i | \leq (2 \delta)^{1/\theta}$), we take $\widetilde{B_i} := aB_i$ with $a= 2^{1/\theta}/2$, so that we get $(2\delta)^{1/\theta} \leq  \left| aB_i \right|\leq 2\delta$.

We have:

$$\sum | \widetilde{B_i} |^s \leq  \sum a^s \left| B_i \right|^s = \sum (2a)^s r_i^s \leq \frac{(2a)^s}{c} \sum \mu (B_i) \leq \frac{M \cdot (2a)^s}{c} \mu(\Rn)< \infty.$$

Now, because of \ref{observacióndesigualdadtrivial} we conclude that $s \geq \udim (\supp{\mu})$.
\end{proof}

\begin{remark}

This argument will not work for $\te=0$ because of Remark \ref{nopara0}: Right at the beginning of the proof, it is not a priori possible to take a cover $B_i$ of $\supp\mu$ as described in the proof, because in the condition in Definition \ref{defdimintmed}, we cannot replace the ''$\mu-a.e.$'' for a ''for all $x$''.

\end{remark}

    

Just like in the case of sets, nontrivial inequalities relating different intermediate dimensions are greatly desired. One such result is the following, which is true for sets (in fact, many results in this paper will prove translations of well-known inequalities for sets):
\begin{proposition} \label{propdesigualdad1}
    For all $0 < \theta \leq \phi \leq 1$,
    $$\udimp (\mu) \leq \frac{\phi}{\theta} \udim (\mu).$$

In particular, if $\udim (\mu) = 0$ for some $\theta \in (0, 1]$ then $\boxs (\mu) = 0$.
\end{proposition}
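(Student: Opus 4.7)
The plan is to start from the equivalent ``for all $x$'' characterisation provided by Lemma~\ref{elctpesunparatodo} (valid since $\theta,\phi>0$) and convert a $\theta$-witness at scale $\delta$ into a $\phi$-witness, with exponent inflated by the factor $\phi/\theta$. Pick any $s > \udim(\mu)$ and obtain constants $c,\delta_0>0$ such that for every $\delta<\delta_0$ (I will shrink $\delta_0$ so that $\delta_0<1$) and every $x\in\supp\mu$ there exists $r=r(x,\delta)\in[\delta^{1/\theta},\delta]$ with $\mu(B(x,r))\geq c\,r^s$. Set $t:=\frac{\phi}{\theta}s$; the goal is to certify that $t\geq\udimp(\mu)$.

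The key observation is that $[\delta^{1/\theta},\delta]$ and $[\delta^{1/\phi},\delta]$ both sit inside $(0,\delta]$ and share the upper endpoint, but the $\theta$-interval extends further down. So given $x$ and $\delta$, I split on whether the $\theta$-witness $r=r(x,\delta)$ already lies in the $\phi$-window. If $r\geq\delta^{1/\phi}$, then $r\in[\delta^{1/\phi},\delta]$ and, using $r\leq\delta<1$ together with $t\geq s$, the bound $\mu(B(x,r))\geq c\,r^s\geq c\,r^t$ is the required $\phi$-witness. Otherwise $r<\delta^{1/\phi}$; then $B(x,r)\subseteq B(x,\delta^{1/\phi})$ and a direct chain of inequalities
\[
\mu\bigl(B(x,\delta^{1/\phi})\bigr)\ \geq\ \mu(B(x,r))\ \geq\ c\,r^s\ \geq\ c\,(\delta^{1/\theta})^s\ =\ c\,\delta^{s/\theta}\ =\ c\,(\delta^{1/\phi})^{t}
\]
shows that $r':=\delta^{1/\phi}\in[\delta^{1/\phi},\delta]$ is a valid $\phi$-witness for the exponent $t$ with the same constant $c$.

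Having produced a witness in both cases, Lemma~\ref{elctpesunparatodo} gives $\udimp(\mu)\leq t=\frac{\phi}{\theta}s$. Letting $s\downarrow\udim(\mu)$ yields the main inequality. The ``in particular'' clause is then immediate: taking $\phi=1$ and using that $\overline{\dim}_1(\mu)=\boxs(\mu)$ (noted after Lemma~\ref{elctpesunparatodo}), the assumption $\udim(\mu)=0$ forces $\boxs(\mu)\leq\frac{1}{\theta}\cdot 0=0$.

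I do not foresee a real obstacle here; the only thing that requires care is that the inequality $r^s\geq r^t$ used in the first case has the correct orientation, which is why one needs $\delta<1$ (hence my early reduction of $\delta_0$). The proof is essentially a transcription of the familiar argument for intermediate dimensions of sets from \cite{survey}, with the ``diameter of covering element'' replaced by the ``radius of a ball witnessing a mass lower bound'' and the monotonicity of diameters replaced by the monotonicity of $\mu$ on nested balls.
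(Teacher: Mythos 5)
Your proof is correct and follows essentially the same argument as the paper: fix $s>\udim(\mu)$, take the $\theta$-witness $r\in[\delta^{1/\theta},\delta]$, and split into the two cases $r\geq\delta^{1/\phi}$ (where $r^s\geq r^{s\phi/\theta}$ since $r<1$) and $r<\delta^{1/\phi}$ (where you pass to the ball of radius $\delta^{1/\phi}$ and use $r\geq\delta^{1/\theta}$). The only cosmetic difference is that you explicitly shrink $\delta_0$ below $1$ and introduce the notation $t=\frac{\phi}{\theta}s$, but the underlying case decomposition and inequalities are identical to those in the paper.
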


\begin{proof}

Let $s> \udim (\mu)$, and $c, \delta_0>0$ as in the definition for $\udim (\mu)$. Then, taking $\delta<\delta_0$ and $x \in \supp{\mu}$, we need to prove that there exists $r \in [\delta^{1/\phi}, \delta]$ such that $\mu(B(x, r))\geq c r^{s \cdot \phi/\theta}$.

By definition, it is clear that there exists $r \in [\delta^{1/\theta}, \delta]$ with $\mu(B(x, r))\geq c r^{s}$. We distinguish two cases:

\begin{enumerate}
    \item If $r \in [\delta^{1/\phi}, \delta]$, then the same $r$ works, because $r^s \geq r^{s \cdot \phi/\theta}$.
    \item If $r \in [\delta^{1/\theta}, \delta^{1/\phi}]$, then we can see that $\delta^{1/\phi}$ works:

    $$\mu(B(x, \delta^{1/\phi})) \geq \mu(B(x, r))\geq c \cdot r^s \geq c \cdot (\delta^{1/\phi})^{\frac{\phi}{\theta}s}. $$
\end{enumerate}

In conclusion, for all $\delta, x$ we can find $r \in [\delta^{1/\phi}, \delta]$ with $\mu(B(x, r)) \geq c r^{s \phi/\theta}$, and $\udimp (\mu) \leq s \frac{\phi}{\theta}$.

\end{proof}

\begin{remark}
This result shows in particular that the intermediate dimensions of measures can be infinite: Theorem 4.1(iii) \cite{HARE_2020} gives an example of a measure such that $\boxs\mu = \infty$, and so every intermediate dimension is also infinite. 
\end{remark}

\begin{remark}\label{remarksets}

Let us at this point briefly discuss this last proof. When proving these type of statements for the dimensions of sets, one usually starts by taking a cover of the set, and then either ''enlarge'' or ''subdivide'' the members of the cover appropriately in order to obtain a new cover, whose members have the diameters falling within a certain range. This is essentially what we did here, but there is an important aspect to note.

In the case of measures, it was not necessary to consider the ''whole set'', in the sense that we only had to study the behavior of balls centered at a fixed $x$. In other words, the dimensions of measures are in a sense ''local'', rather than ''global'' as in sets. This difference can be seen more easily with an example:

First, consider $F_p := \Set{\frac{1}{n^p}}\cup \Set{0}$, and the set:
$$C^n_p := \Set{x \in \Rn : |x| \in F_p} = \bigcup_k S^{n-1}_{1/k^p} = \bigcup_k S_k $$

\end{remark}

\begin{figure}[htbp]
\includegraphics[scale=.3]{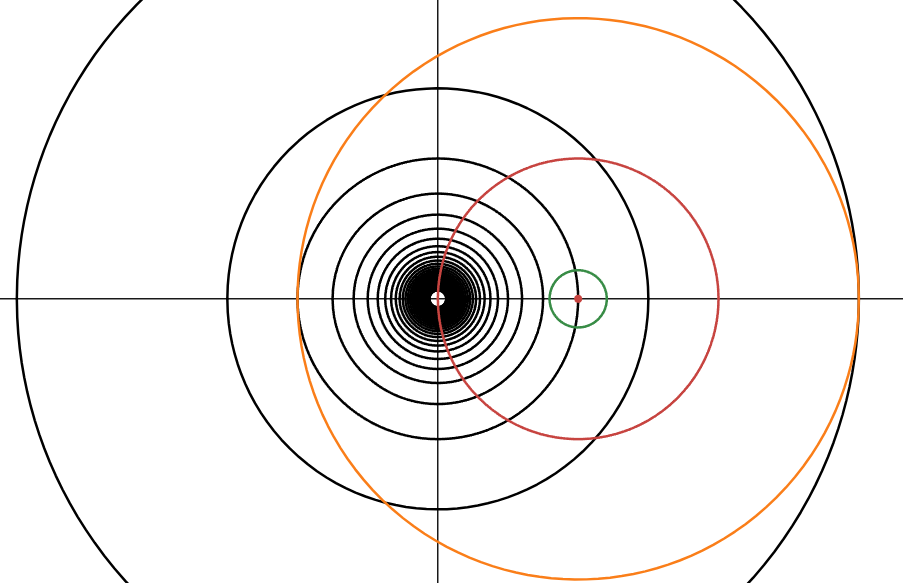}
\caption{The set $C_{p}^2$ (in black) and some balls around one of its elements.}
\label{fig1}
\end{figure}

Now, suppose $p > \frac{1}{n-1}$, so that the measure $\mu = \sum \mathcal{H}^{n-1}_{|S_k}$ is finite. We will show that, in fact $\boxs(\mu) \leq n-1$: For this, take $x \in C_n^p$, so that $s \in S_k$ for some $k$ and $r>0$, and consider three cases: $r\leq 1/k^p$, $r\geq 2/k^p$ and $ 1/k^p \leq r \leq 2/k^p$ (see figure \ref{fig1}). For the first two cases, it is clear that $\mu(B(x, r)) \geq  c \cdot r^s$ for some constant $c>0$, and the same can be deduced for the third case by interpolating between the first two cases.

Moreover, just as easily one can obtain the same inequality for the upper Hausdorff dimension, $\dim^*_{H}(\mu) = n-1$. Then, all intermediate dimensions are equal to $n-1$. However, $\dim_H(C^n_p)= n-1$ and so we obtain:
$$\boxs(C^n_p) \leq \boxs(\mu) \leq n-1 = \dim_H(C^n_p).$$

Then, the box dimension of this set can be obtained via the intermediate dimensions of measures. This last part is especially interesting because even though the proof of this last fact can be done without measures, it would involve a not small amount of calculations (see \cite{tan2020intermediate}), whereas this proof is a simple geometric observation. 

\begin{corollary} \label{corcont}
The function $\theta \mapsto \udim (\mu)$ is continuous in $(0, 1]$.
\end{corollary}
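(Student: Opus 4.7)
The continuity will be squeezed out of Proposition \ref{propdesigualdad1}: combining that bound with the monotonicity $\udim(\mu)\le\udimp(\mu)$ for $\theta\le\phi$ (item (1) of the remark following Definition \ref{defdimintmed}) yields a sandwich whose outer bounds pinch together as the ratio $\phi/\theta\to 1$. My plan is to fix $\theta_0\in(0,1]$ and handle the two one-sided limits separately, absorbing the possibility that $\overline{\dim}_{\theta_0}(\mu)=+\infty$ as a small bookkeeping step at the end of each case.

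For right continuity (only meaningful when $\theta_0<1$) I would apply Proposition \ref{propdesigualdad1} with $\theta=\theta_0$ and $\phi\in(\theta_0,1]$ to obtain
$$\overline{\dim}_{\theta_0}(\mu)\le \udimp(\mu)\le \frac{\phi}{\theta_0}\,\overline{\dim}_{\theta_0}(\mu),$$
so that $\phi\searrow\theta_0$ forces $\udimp(\mu)\to\overline{\dim}_{\theta_0}(\mu)$ whenever $\overline{\dim}_{\theta_0}(\mu)<\infty$. If instead $\overline{\dim}_{\theta_0}(\mu)=+\infty$, monotonicity already gives $\udimp(\mu)=+\infty$ for all $\phi\ge\theta_0$ and right continuity is immediate.

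For left continuity I would set $\phi=\theta_0$ and let $\theta\in(0,\theta_0)$ in the same proposition, producing the companion sandwich
$$\udim(\mu)\le \overline{\dim}_{\theta_0}(\mu)\le \frac{\theta_0}{\theta}\,\udim(\mu).$$
When $\overline{\dim}_{\theta_0}(\mu)<\infty$, the factor $\theta_0/\theta\to 1$ again squeezes $\udim(\mu)\to\overline{\dim}_{\theta_0}(\mu)$ as $\theta\nearrow\theta_0$. When $\overline{\dim}_{\theta_0}(\mu)=+\infty$, the right-hand inequality forces $\udim(\mu)=+\infty$ for every $\theta\in(0,\theta_0)$ — otherwise its right-hand side would be a finite number bounding $+\infty$ from above — so left continuity holds in this case as well.

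The whole argument is essentially algebraic once Proposition \ref{propdesigualdad1} is in hand; no delicate estimate is needed. The only obstacle worth flagging is the bookkeeping for infinite values of $\udim(\mu)$, which as indicated above is absorbed by monotonicity together with the contrapositive of the proposition.
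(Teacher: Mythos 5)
Your proof is correct and is exactly the argument the paper intends: Corollary~\ref{corcont} is stated without an explicit proof because it follows immediately from the sandwich
$\udim(\mu)\le\udimp(\mu)\le\frac{\phi}{\theta}\udim(\mu)$ (monotonicity plus Proposition~\ref{propdesigualdad1}), and your handling of the $+\infty$ case via the same inequalities is a reasonable bit of bookkeeping that the paper leaves implicit.
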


Considering that, for sets, it is important to determine whether $\theta \mapsto \udim (\supp{\mu})$ is continuous at $0$ or not, it would be interesting to try to relate the continuity at $0$ for sets to the continuity at $0$ for measures. As $\dim_H^*(\mu) \leq \dim_H((\supp{\mu}))$, we first need to establish conditions that guarantee that we have $\dim_H^*(\mu) = \dim_H((\supp{\mu}))$ (otherwise, $\theta \mapsto \udim(\mu)$ is simply not continuous at $0$).
 
Using \cite[Proposition 10.10 ]{Falconertechniques}, which decomposes any measure into a sum $\mu = \sum_s \mu_s + \mu^D$, it is not difficult to verify that if one has $\mu^D = 0$, then $\dim_H^*(\mu) = \dim_H((\supp{\mu}))$. Consequently, it is clear that the continuity of $\theta \mapsto \udim (\mu)$ at $0$ implies the same for $\theta \mapsto \udim ((\supp{\mu}))$.

However, this is not an equivalence: even if $\dim_H^{*}(\mu) = \dim_H (\supp{\mu})$, we will see later an example of a measure $\mu$ such that $\te \rightarrow \udim (\mu)$ not continuous while $\te \rightarrow \udim(\supp{\mu})$ is continuous.

\vspace{0.3cm}

Before proving the main theorem, we wish to point out a couple of remarks.

\begin{remark}
\label{obsdesigualdadmedidas}
    Let $\mu, \nu$ be measures with equal support. If $\mu(B) \leq \nu(B)$ is true for every Borel set $B$, then $\udim(\nu) \leq \udim(\mu)$ for all $\theta \in [0, 1]$. Hence, clearly
   $$ \udim(\mu + \nu) \leq \min\Set{\udim(\mu), \udim(\nu)}.$$
\end{remark}

\begin{remark} \label{obsdeltan}

Taking a closer look at the condition in Definition \ref{defdimintmed}, It is not difficult to notice that it suffices to check the condition $\delta< \delta_0$ for a decreasing sequence $\delta_n \to 0$ such that $\delta_{n+1} \geq a \cdot \delta_n$ for some $a\in \mathbb{R}_{>0}$:
\end{remark}

\begin{proof}

Take $\delta \in [\delta_{n+1}, \delta_n]$. We need to verify that this $\delta$ satisfies the condition in \ref{defdimintmed}:

To show this, taking the $r \in[\delta_{n+1}^{1/\theta}, \delta_{n+1}]$ associated with $\delta_{n+1}$, there are two cases: If $r \in [\delta^{1/\theta}, \delta]$, then the same $r$ works for $\delta$. If not ($r < \delta^{1/\theta}$), then we have:

$$\mu(B(x, \delta^{1/\theta}))\geq \mu(B(x, r))\geq c\cdot r^s \geq c \cdot \delta_{n+1}^{s/\theta}\geq c\cdot a^{s/\theta} \cdot \delta_n^{s/\theta} \geq \widetilde{c} \cdot (\delta^{1/\theta})^s.$$
\end{proof}

\begin{theorem} \label{teoprincipal}
Let $E \subseteq \Rn$ be compact. Then, for every $\theta \in (0, 1]$,
$$\udim(E) = \min \Set{\udim (\mu) : \mu(\Rn)< \infty \text{ and } \supp{\mu}=E }.$$
Moreover, there exists a finite measure $\mu$ such that  $\supp{\mu}=E$, and
$$\udim (E) = \udim (\mu) \: , \: \forall \theta \in (0, 1].$$
\end{theorem}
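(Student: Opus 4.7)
The inequality $\udim(E) \leq \udim(\mu)$ for any finite $\mu$ with $\supp\mu = E$ is Proposition~\ref{musop1}, so the right-hand side of the claimed identity is automatically $\geq \udim(E)$. It remains to construct a single finite Borel measure $\mu$ supported on $E$ with $\udim(\mu) \leq \udim(E)$ for every $\theta \in (0,1]$ simultaneously.

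I would proceed in two stages. \emph{Stage one:} for a \emph{fixed} $\theta \in (0,1]$, produce $\nu = \nu_\theta$ with $\supp\nu = E$ and $\udim(\nu) = \udim(E)$. Fix $s > \udim(E)$ and a decreasing sequence $\delta_n \downarrow 0$ with $\delta_{n+1} \geq a\,\delta_n$ (for instance $\delta_n = 2^{-n}$, $a = 1/2$). By the definition of $\udim(E)$, for each $n$ sufficiently large there is a cover $\{U_i^n\}_i$ of $E$ with $\delta_n^{1/\theta} \leq |U_i^n| \leq \delta_n$ and $\sum_i |U_i^n|^s \leq 2^{-n}$. Discarding the $U_i^n$ that miss $E$, pick $x_i^n \in U_i^n \cap E$ and set $r_i^n := |U_i^n|$, so that $B(x_i^n, r_i^n) \supseteq U_i^n$ and $r_i^n \in [\delta_n^{1/\theta}, \delta_n]$. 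Let $\nu_n$ be the atomic measure assigning mass $(r_i^n)^s$ to each $x_i^n$, and put $\nu := \sum_n \nu_n$; then $\nu(\Rn) \leq \sum_n 2^{-n} < \infty$. For any $x \in E$ and any $n$ there is an index $i$ with $x \in B(x_i^n, r_i^n)$, equivalently $x_i^n \in B(x, r_i^n)$, so $\nu(B(x, r_i^n)) \geq (r_i^n)^s$ with $r_i^n \in [\delta_n^{1/\theta}, \delta_n]$. By Remark~\ref{obsdeltan} this yields $\udim(\nu) \leq s$. Since the centers $x_i^n$ are $\delta_n$-dense in $E$, one has $\supp\nu = E$. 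Taking $s_k \downarrow \udim(E)$ and forming a weighted sum with summable weights of the resulting measures produces a single $\nu_\theta$ with $\udim(\nu_\theta) = \udim(E)$ via Remark~\ref{obsdesigualdadmedidas} and Proposition~\ref{musop1}.

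\emph{Stage two:} fix a countable dense subset $\{\theta_k\}_k \subseteq (0,1]$ and define
\begin{equation*}
    \mu := \sum_k \frac{2^{-k}}{\nu_{\theta_k}(\Rn)}\, \nu_{\theta_k}.
\end{equation*}
Then $\mu$ is a finite measure with $\supp\mu = E$. For each $k$, $\mu$ dominates a positive multiple of $\nu_{\theta_k}$, so Remark~\ref{obsdesigualdadmedidas} gives $\overline{\dim}_{\theta_k}(\mu) \leq \overline{\dim}_{\theta_k}(\nu_{\theta_k}) = \overline{\dim}_{\theta_k}(E)$; combined with Proposition~\ref{musop1} this is equality at each $\theta_k$. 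Since $\theta \mapsto \udim(\mu)$ and $\theta \mapsto \udim(E)$ are both continuous on $(0,1]$ (the former by Corollary~\ref{corcont}, the latter by the well-known set analogue of Proposition~\ref{propdesigualdad1}), they must coincide on all of $(0,1]$.

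\emph{Main obstacle.} The heart of the argument is the atomic construction in Stage one. The delicate points are: (i) replacing the arbitrary covering sets $U_i^n$ by balls centered at points of $E$, so that the atoms actually contribute to $\nu(B(x,r))$ from below; (ii) arranging the scales $\delta_n$ and the covering-sum bounds $2^{-n}$ compatibly, so that $\sum_n \nu_n(\Rn) < \infty$ yet each $\nu_n$ still carries the full atomic mass $(r_i^n)^s$ at its scale; and (iii) invoking Remark~\ref{obsdeltan} to restrict verification of the intermediate-dimension condition to the discrete sequence $\delta_n$ at which covers are available. Once these are in place, the diagonalisation across $\theta$ and the continuity step are routine.
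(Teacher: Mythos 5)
The overall plan mirrors the paper's proof (build an atomic measure from covers at dyadic scales for a fixed $\theta$, then form a weighted sum over a countable dense set of $\theta$'s and invoke continuity), but there is a genuine gap in Stage one.

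You assert that, for a fixed $s > \udim(E)$ and the scales $\delta_n = 2^{-n}$, ``by the definition of $\udim(E)$, for each $n$ sufficiently large there is a cover $\{U_i^n\}_i$ of $E$ with $\delta_n^{1/\theta} \leq |U_i^n| \leq \delta_n$ and $\sum_i |U_i^n|^s \leq 2^{-n}$.'' This does not follow from Definition~\ref{defdimintgener}. The definition only says: for each fixed $\varepsilon > 0$ there exists $\delta_0(\varepsilon)$ such that for every $\delta < \delta_0(\varepsilon)$ a cover with $\sum |U_i|^s < \varepsilon$ exists. If you let $\varepsilon_n = 2^{-n}$ and try to match it with $\delta_n = 2^{-n}$, you have no control on how fast $\delta_0(\varepsilon_n)$ shrinks relative to $2^{-n}$; in particular you cannot guarantee a cover at scale $2^{-n}$ with sum $\leq 2^{-n}$. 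This matters twice: it is what makes $\nu(\Rn) = \sum_n \nu_n(\Rn)$ finite, and you cannot simply instead take a much faster-decaying sequence of scales because the application of Remark~\ref{obsdeltan} at the end requires $\delta_{n+1} \geq a\,\delta_n$.

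The paper repairs exactly this by inserting an intermediate exponent: choose $t$ with $\udim(E) < t < s$, apply the definition with the \emph{fixed} value $\varepsilon = 1$ and exponent $t$ (so a single uniform $\delta_0$ works for all $\delta < \delta_0$), obtain covers $\{U_i^{(n)}\}$ at every scale $\delta_n = 2^{-n}$ with $\sum_i |U_i^{(n)}|^t < 1$, and then observe
\begin{equation*}
\sum_i |U_i^{(n)}|^s = \sum_i |U_i^{(n)}|^t\, |U_i^{(n)}|^{s-t} \leq \delta_n^{s-t} \sum_i |U_i^{(n)}|^t < 2^{-n(s-t)},
\end{equation*}
which is summable in $n$. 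After that insertion, the rest of your Stage one (pick $x_i^n \in U_i^n \cap E$, set $\nu := \sum_n \sum_i |U_i^{(n)}|^s \delta_{x_i^{(n)}}$, verify $\udim(\nu) \leq s$ via Remark~\ref{obsdeltan} and full support), the passage from $\inf$ to $\min$ via a weighted sum, and Stage two are all correct and coincide with the paper's proof.
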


\begin{proof}

We proceed to prove this in three steps:

\begin{enumerate}[label=\Roman*)]
    \item We first prove that for each $\theta$, we have $$\udim(E) = \inf \Set{\udim (\mu) : \mu(\Rn)< \infty \text{ and } \supp{\mu}=E}.$$

    \item We will show that this infimum is in fact a minimum.
    \item We will construct the measure $\mu$ that satifies the equality for all $\theta$.
\end{enumerate}

\begin{enumerate}[label=\Roman*)]

\item Fix $\theta$, by Proposition~\ref{musop1}, we have the inequality $\leq$. 
 Now, let $s > t > \udim(E)$. We will show that there exists $\mu$ with $\supp \mu = E$, such that $s>\udim(\mu)$.

Taking $\varepsilon=1$ in the definition of $\udim(E)$, it is clear that there exists $ \delta_0$ such that, for every $\delta<\delta_0$ there exists a cover $ \{U_i\}$ of $E$ with  $\delta^{1/\theta}\leq  \left| U_i \right|\leq \delta$, and $\sum  \left| U_i \right|^t<1$. Now, taking $n_0$ such that $\frac{1}{2^{n_0}}<\delta_0$, and $\delta_n = \frac{1}{2^n}$, we obtain that there
exists $n_0$ such that for every $n \geq n_0$ we have a covering of $E$, $ \{ U_i^{(n)} \}_{i=1 , \dots , C_n}$, with:
$$\delta_n^{1/\theta}\leq  \left| U_i^{(n)} \right|\leq \delta_n \: , \: \text{ and } \: \sum_{i=1}^{C_n}  \left| U_i^{(n)} \right|^t < 1.$$

Now, there exists $  c>0$ such that for all $ n\geq n_0$
$$\sum | U_i^{(n)} |^s = \sum | U_i^{(n)} |^t | U_i^{(n)} |^{s-t} \leq \delta_n^{s-t} \sum | U_i^{(n)} |^t \leq \frac{1}{(2^{s-t})^n}\leq \frac{c}{n^2}. $$
Therefore, we can define the following measure $\mu$, taking for each $n, i$ some element $x_i^{(n)} \in U_i^{(n)} \bigcap E$:

$$\mu  := \sum_{n \geq n_0} \sum_{i=1}^{C_n} | U_i^{(n)} |^s \delta_{x_i^{(n)}}.$$

The measure $\mu$ is fully supported in $E$, since its support is contained in $E$ and for each $x \in E$ a sequence of points among the $x_i^{(n)}$ can be found such that it converges to $x$. It is a finite measure, because
$$\mu(\Rn) = \sum_{n \geq n_0} \sum_{i=1}^{C_n} | U_i^{(n)} |^s \leq \sum_{n \geq n_0} \frac{c}{n^2}< \infty.$$

Now, the only thing that needs to be verified for this step is that $s> \udim(\mu)$. For this, Observation~\ref{obsdeltan} will be helpful.

Taking the sequence $\delta_n = \frac{1}{2^n}$ ($n \geq n_0$), $n\geq n_0$ and $x \in \supp{\mu}=E$, there exists $i$ such that $x \in U_i^{(n)}$ and so we can take $r=| U_i^{(n)} | \in [\delta_n^{1/\theta}, \delta_n]$, so that 
$x_i^{(n)} \in B(x, r)$, and: 

$$\mu(B(x, r)) \geq \mu ({x_i^{(n)}}) \geq | U_i^{(n)} |^s = r^s.$$

Then, $s>\udim (\mu)$.

\item 
In order to show that the infimum is a minimum, we remark that multiplying a measure by a constant factor does not affect its intermediate dimensions. With this in mind, because of the previous step there is a sequence of measures $\mu_n$, all finite and fully supported in $E$, such that $\udim(\mu_n) \searrow \udim(E)$, and by the previous comment we can suppose that they also satisfy $\mu_n(\Rn) \leq \frac{1}{n^2}$. 

Thus, $\mu_\theta := \sum \mu_n$ is a finite measure, fully supported in $E$, and by Remark~\ref{obsdesigualdadmedidas} we conclude that:
 
$$\udim (\mu_\theta) = \udim (\sum \mu_n) \leq \udim (\mu_n) \: \forall \: n,$$

so that $\udim(E) \leq \udim (\mu_\theta) \leq \udim(E)$.

\item To construct the measure that verifies the equality in the statement of the theorem, for all $\theta$, first let $\{q_n\}_{\mathbb{N}} = \mathbb{Q} \bigcap (0, 1]$ be an ordering of the rationals in the unit interval. For each $n$ there exists $ \mu_{q_n}$ finite and fully supported in $E$ with $\overline{\dim}_{q_n}(\mu_{q_n}) = \overline{\dim}_{q_n}(E)$, and without loss of generality satisfying $\mu_{q_n} (\Rn) \leq 1/n^2$. Then, just like before. the measure:

$$\mu := \sum \mu_{q_n}$$ can be considered, which is finite, fully supported in $E$, and 
$$\overline{\dim}_{q_n}(E) \leq \overline{\dim}_{q_n} (\mu) \leq \overline{\dim}_{q_n} (\mu_{q_n}) = \overline{\dim}_{q_n}(E) \: \forall \: n.$$

In conclusion, $\udim(\mu) = \udim(E)$ for all $\theta \in \mathbb{Q}$. In addition, the functions $\theta \mapsto \udim(E)$ and $\theta \mapsto \udim(\mu)$ are both continuous in $(0, 1]$ and coincide in a dense subset of $(0, 1]$, so:
$$\udim(\mu) = \udim(E) \: \forall \: \theta \in (0, 1].$$
\end{enumerate}
\end{proof}

\begin{remark}
The proof given in \cite{Falconerminkowski} for $\boxs\mu$ did not rely on proving particular cases first: To be more specific, the definition of Minkowski dimensions for sets enabled the author to prove the minimum without having to demonstrate the infimum. However, because of our definition of intermediate dimensions, it was necessary to check the step I first.
\end{remark}

The importance of this theorem lies in the fact that it allows some results for dimensions of measures to be translated to results for dimensions of sets. For example, the inequality $\udimp (E) \leq \frac{\phi}{\theta}\udim(E)$ can be deduced directly from Proposition~\ref{propdesigualdad1} by taking the measure given by Theorem \ref{teoprincipal}.

\begin{proposition}
    Let $\mu$ be a measure such that $\dim_A(\mu)< \infty$ and $\theta \in(0, 1]$. Then,
    $$\udim(\mu) \geq \dim_A(\mu) - \frac{\dim_A(\mu) - \boxs(\mu)}{\theta}.$$
\end{proposition}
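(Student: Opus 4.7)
The plan is to combine the definition of $\dim_A(\mu)$ (the upper regularity dimension: for every $A > \dim_A(\mu)$ there exist $C,\rho_0 > 0$ with $\mu(B(x,R)) \leq C(R/\rho)^A\,\mu(B(x,\rho))$ for all $x \in \supp{\mu}$ and $0 < \rho \leq R \leq \rho_0$) with the uniform ``for all $x$'' reformulation of $\udim(\mu)$ provided by Lemma~\ref{elctpesunparatodo}. The point is that $\udim(\mu)$ furnishes, at each scale $\delta$ and each $x$, an intermediate radius $r \in [\delta^{1/\theta},\delta]$ where $\mu(B(x,r)) \geq c\,r^s$; the Assouad-type bound then transports this lower estimate \emph{down} to the smallest allowed radius $\delta^{1/\theta}$, producing a uniform Minkowski-type lower bound on $\mu$-balls of radius $\eta = \delta^{1/\theta}$. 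By Lemma~\ref{obsminkdelta} this controls $\boxs(\mu)$, and rearranging yields the claim.

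Fix $s > \udim(\mu)$ and $A > \dim_A(\mu)$. Using $\boxs(\mu) \leq \dim_A(\mu)$ and $\theta \leq 1$, the right-hand side of the desired inequality is at most $\boxs(\mu) \leq \dim_A(\mu) < A$, so if $s \geq A$ the estimate is immediate; hence assume $s < A$. By Lemma~\ref{elctpesunparatodo} there exist $c,\delta_0 > 0$ such that for every $\delta < \delta_0$ and every $x \in \supp{\mu}$ one can choose $r = r(x,\delta) \in [\delta^{1/\theta},\delta]$ with $\mu(B(x,r)) \geq c\,r^s$. Applying the $\dim_A$ inequality with $R = r$ and $\rho = \delta^{1/\theta} \leq r$ gives
\[
\mu\bigl(B(x,\delta^{1/\theta})\bigr) \;\geq\; C^{-1}\bigl(\delta^{1/\theta}/r\bigr)^{A}\, \mu(B(x,r)) \;\geq\; cC^{-1}\,\delta^{A/\theta}\,r^{s-A}.
\]
Since $s - A < 0$ and $r \leq \delta$, one has $r^{s-A} \geq \delta^{s-A}$; writing $\eta = \delta^{1/\theta}$ this yields
\[
\mu(B(x,\eta)) \;\geq\; c'\,\eta^{A(1-\theta)+s\theta}
\]
uniformly in $x \in \supp{\mu}$ and for all sufficiently small $\eta$.

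By Lemma~\ref{obsminkdelta} we conclude $\boxs(\mu) \leq A(1-\theta)+s\theta$. Letting $A \searrow \dim_A(\mu)$ and $s \searrow \udim(\mu)$ gives $\boxs(\mu) \leq \dim_A(\mu)(1-\theta) + \udim(\mu)\,\theta$, which is exactly the claimed inequality after rearrangement. The only delicate point is keeping track of the sign of the exponent $s-A$ at the step where $r^{s-A}$ is bounded below by $\delta^{s-A}$; this is precisely why the case $s \geq A$ must be separated off at the outset. Everything else is a direct substitution into the cited definitions.
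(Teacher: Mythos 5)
Your argument is correct, and it takes a genuinely different route from the paper's. The paper works contrapositively on the $\udim$ side: it takes $b<\boxs(\mu)$, uses the negation of the Minkowski bound to find scales $\delta$ and points $x$ where $\mu(B(x,\delta))<c\,\delta^{b}$, then uses the Assouad inequality to push that \emph{smallness} up to every radius $r$ in the admissible window, concluding that $\mu(B(x,r))<c'\,r^{d-(d-b)/\theta}$ for all such $r$, so the $\udim$ condition fails at exponent $d-(d-b)/\theta$. You instead go the other way: starting from $s>\udim(\mu)$, you take the \emph{good} radius $r$ furnished by Lemma~\ref{elctpesunparatodo}, use the Assouad bound to transport the lower bound $\mu(B(x,r))\geq c\,r^{s}$ \emph{down} to the endpoint $\eta=\delta^{1/\theta}$, and read off the uniform estimate $\mu(B(x,\eta))\gtrsim\eta^{A(1-\theta)+s\theta}$, which by Lemma~\ref{obsminkdelta} gives $\boxs(\mu)\leq (1-\theta)A+\theta s$. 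Taking infima and rearranging recovers the stated inequality. Your version has the merit of isolating the cleaner, convexity-flavoured statement $\boxs(\mu)\leq(1-\theta)\dim_A(\mu)+\theta\,\udim(\mu)$, and it avoids the slightly delicate step of negating the $\boxs$ definition; the paper's version, conversely, makes the role of the ``bad point'' explicit. Your treatment of the case $s\geq A$ (and the observation that $r^{s-A}\geq\delta^{s-A}$ because $s-A<0$ and $r\leq\delta$) is exactly the care one needs, so there is no gap.
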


\begin{proof}

Just like in Remark \ref{remarksets}, this proof is analogous to the one given for sets in \cite{falconer2020intermediate}(Prop. 2.4). Take: $$0<b<\boxs(\mu) \leq \dim_A(\mu)<d<\infty$$ (if $\boxs(\mu) = 0$ then the result is trivial). Then, the following properties are satisfied:

\begin{enumerate}
    \item \label{primera} There exists $\widetilde{c}>0$ such that 
    $$\frac{\mu(B(x, r))}{\mu(B(x, R))}\geq \widetilde{c} \cdot
     \left ( \frac{r}{R} \right )^d \:, \: \text{for all } \: 0<r<R<1, x\in\supp{\mu}.$$

    \item \label{second} For all $ c, \delta_0 > 0 $ there exist $ x \in \supp{\mu},  \delta<\delta_0 \: $ such that $\: \mu(B(x, \delta))< c \cdot \delta^b$.  

\end{enumerate}

Therefore, we need to verify that $\udim(\mu) > d - \frac{d-b}{\theta}$, or in other words, that for all $c, \delta_0$ there exist $ \: \delta<\delta_0, x \in \supp{\mu}$ such that, for all $r\in[\delta^{1/\theta}, \delta]$, we have $\mu(B(x, r))< c \cdot r^{d - (d-b)/\theta}$:

Take $c, \delta_0>0$. Because of property (\ref{second}), there exists $ \:\delta<\delta_0$ with $$\mu(B(x, \delta))< c \cdot \delta^b.$$ Now, with these $\delta, x$, notice that for every $r \in[\delta^{1/\theta}, \delta]$ we can use property \eqref{primera} with $r$ and $\delta$, so that:

$$\widetilde{c} \cdot  \left ( \frac{\delta}{r} \right )^d \leq \frac{\mu(B(x, \delta))}{\mu(B(x, r))} < \left ( \frac{\delta}{r} \right )^d < \frac{c \cdot \delta^b}{\mu(B(x, r))},$$
and then, 
$$\mu(B(x, r))< \frac{c}{\widetilde{c}} \cdot \delta^{b-d}r^d \leq \frac{c}{\widetilde{c}} \cdot r^{d - (d-b)/\theta},$$
so the result follows.
\end{proof}

\begin{corollary} \label{cordegeq}
    If $\dim_A(\mu) = \boxs(\mu) < \infty$, then $\udim(\mu) = \dim_A(\mu) $ for all $ \theta \in (0, 1]$.
\end{corollary}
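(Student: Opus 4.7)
The plan is to observe that this corollary is essentially an immediate consequence of the preceding proposition combined with the basic inequality chain between the three dimensions involved. Specifically, the preceding proposition gives the lower bound
\[
\udim(\mu) \geq \dim_A(\mu) - \frac{\dim_A(\mu) - \boxs(\mu)}{\theta},
\]
and under the hypothesis $\dim_A(\mu) = \boxs(\mu)$, the fraction on the right vanishes, so the lower bound collapses to $\udim(\mu) \geq \dim_A(\mu)$.

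For the matching upper bound, I would invoke the standard inequalities already recorded in the remark after Definition \ref{defdimintmed}, namely $\udim(\mu) \leq \boxs(\mu)$ for every $\theta \in [0,1]$. Chaining this with the hypothesis $\boxs(\mu) = \dim_A(\mu)$ gives $\udim(\mu) \leq \dim_A(\mu)$.

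Combining the two bounds yields $\udim(\mu) = \dim_A(\mu)$ for every $\theta \in (0,1]$. There is really no obstacle here; the only thing to be slightly careful about is the edge case where $\boxs(\mu) = 0$, which already forces $\dim_A(\mu) = 0$ under the hypothesis, so the monotonicity $\udim(\mu) \leq \boxs(\mu) = 0$ gives the corollary trivially in that case as well. Thus the entire proof reduces to citing the previous proposition and the monotonicity inequality in one or two sentences.
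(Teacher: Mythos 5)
Your proof is correct and is exactly the argument the paper intends: the lower bound $\udim(\mu)\geq\dim_A(\mu)$ follows from the preceding proposition once the fraction $\frac{\dim_A(\mu)-\boxs(\mu)}{\theta}$ vanishes under the hypothesis, and the upper bound $\udim(\mu)\leq\boxs(\mu)=\dim_A(\mu)$ is the monotonicity inequality already recorded in the remark after Definition~\ref{defdimintmed}. The edge-case discussion for $\boxs(\mu)=0$ is harmless but unnecessary, since the proposition's conclusion already reads $\udim(\mu)\geq 0$ in that case and the chain closes the same way.
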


\begin{remark}
    This last corollary shows that it is possible that the function $\theta \mapsto \udim(\mu)$ is constant while $\theta \mapsto \udim(E)$ is not.

    For this, take $E = \Set{\frac{1}{n}}_{\mathbb{N}}$, and $\mu (\frac{1}{n})= \frac{1}{n^2}$. Then, by  \cite[theorem 4.1]{HARE_2020}, one concludes that:

\begin{itemize}
    \item $\dim_A(\mu) = \boxs(\mu) = 1$
    \item $\dim_{\theta}(E) = \frac{\theta}{1+\theta}$, for $\theta \in [0,1]$,
\end{itemize}
and hence $\theta \mapsto \udim(\mu)$ is constant.
\end{remark}

\begin{remark}

A question related to the second step in the proof of Theorem~\ref{teoprincipal} would be whether the function $\mu \mapsto \udim(\mu)$ (with fixed $\theta$) is continuous in the weak topology. However, this is not true, and it is not difficult to find a counterexample:

The idea behind the counterexample consists in taking the underlying set as $$F_1 =\Set{0} \bigcup \Set{\frac{1}{n}}_{\mathbb{N}},$$ and a measure supported in such set, and then 'disturb it' (this counterexample works with $\theta = 1$, in fact):

Following the calculations found in \cite[theorem 4.1]{HARE_2020}, taking the measures:

$$\mu := \sum \frac{1}{n^{3/2}} \delta_{1/n} \:, \: \mu_k := \mu\vert_{[1/(k-1), 1]} + \sum_{n=k}^{\infty} \frac{1}{n^{11/10}} \delta_{1/n}$$

we have that $\boxs(\mu)= \frac{3}{4}$, and $\boxs(\mu_k) = \frac{11}{20}$. However, $\mu_k  \rightharpoonup \mu$, hence the function $\mu \mapsto \boxs(\mu)$ is not continuous.

\end{remark}

We conclude this section with a direct calculation of the intermediate dimensions of certain discrete measures:

Following the notation in \cite{HARE_2020}, let $\lambda>0, \beta>1$, and for $k \in \mathbb{N}$ take $a_k := 1/k^\lambda$ and $p_k:=1/k^\beta$. We can take the set $E := \{  a_k \}_\mathbb{N} \cup \{0\}$ and the measure $\mu := \sum p_k \de_{a_k}$, and the following constants associated with these parameters:
$$s:=\frac{\beta -1}{\lambda} \: \: , \: \: t:= \frac{\beta}{\lambda + 1} \: \:,\:\: C(\te):= \frac{\beta\te}{\lambda} + (1-s) \:\:  \:\:, \: \: f(\te) :=\frac{\beta \te}{\beta \te + \lambda + 1 - \beta}= \frac{\beta\te}{\lambda C(\te)}.$$

Our goal is to prove the following result:

\begin{theorem} \label{lacuenta2}
Let $\te \in (0, 1]$. Then, for the measure $\mu$ defined above we have:
\begin{equation}\label{lacuenta}
\udim \mu= \left\{ \begin{array}{lcc} s & \text{if} & s\geq1 \\ \max \left\{ s, f(\te) \right\} & \text{if} &  s<1 \end{array} \right.\quad
\end{equation}

\end{theorem}

Before giving the proof, we state one lemma and some observations regarding the theorem. Throughout this last part of the section, $\mu$ will always be $\sum p_n \delta_{a_n}$.

\begin{obs}\label{obscuenta}
When $s<1$ it is easy to show that $t>s$, so in this case $f(1)>s$ and so $\udim \mu$ will be equal to $f(\te)$ in an interval containing $1$. Moreover, it is clear that, taking $\te= (\beta-1)\beta^{-1}$, we obtain $s=f(\te)$, implying that $$\max \left\{ s, f(\te) \right\} = f(\te)$$ in $[(\beta-1)\beta^{-1}, 1]$ and $\max \left\{ s, f(\te) \right\}=s$ outside that interval (\textcolor{red}{see figure \ref{fig2}}).

In fact, in the proof we will consider the cases $s\geq1$ and $s<1$ separately.
\end{obs}
\begin{figure}[htbp]
\includegraphics[scale=.55]{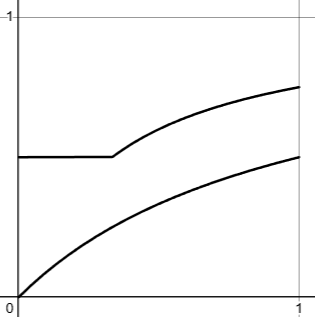}
\caption{$\udim\mu$ and $\udim E$ for $\lambda=1, \beta=3/2$.}
\label{fig2}
\end{figure}

Finally, we will need two lemmas, which are a direct consequence of lemmas 4.3 and 4.4 (\cite{HARE_2020}).

\begin{lemma} \label{lemadelacuenta}
For every $\varepsilon>0$ there exists $r_0$ such that, for all $r<r_0$ we have the following:
$$s-\varepsilon< \frac{\log\mu (B(a_k, r))}{\log r}<s+\varepsilon \:\:, \:\:\text{if  } r>a_k.$$
$$\frac{\log a_k}{\log r}(s-1) + 1-\varepsilon< \frac{\log\mu (B(a_k, r))}{\log r}<\frac{\log a_k}{\log r}(s-1) + 1+\varepsilon \:\:, \:\:\text{if   } a_k - a_{k+1}<r\leq a_k.$$
$$\frac{\log\mu(B(a_k, r))}{\log r} = \frac{\beta}{\lambda}\frac{\log a_k}{\log r} \text{ if } r \leq a_k - a_{k+1}.$$
\end{lemma}

With this in mind, we can proceed with the proof of Theorem \ref{lacuenta2}.

\begin{proof}

We will prove the result for $s\geq 1$ and $s < 1$ separately.

\underline{Case $s\geq 1:$} We need to prove that $\udim \mu = s$ for all $\theta>0$. Theorem 4.1 (\cite{HARE_2020}) implies that $\boxs \mu = s$, which means that it is enough to prove that, for all $\varepsilon>0$, $\udim \mu > s-\varepsilon$, or in other words, that:
\begin{equation*}
 \text{ For all } c, \de_0, \text{ there exist } \de<\de_0\:,\: a_k \text{ such that for } r\in [\de^{1/\theta}, \de]\:,\: \frac{\log \mu(B(a_k, r))}{\log r}> \frac{\log c}{\log r} + s-\varepsilon.   
\end{equation*}

But this is clear by the previous lemma, taking $a_k, \de$ sufficiently small and such that $a_k<\de^{1/\te}$.

\underline{Case $s< 1:$} We will first prove that $\udim \mu \geq \max\{ s, f(\te) \}$. 

Following the same reasoning as in the first case, it is clear that $\udim \mu \geq s$. However, Theorem 4.1(\cite{HARE_2020}) implies that $\boxs \mu =t$, so more care is needed. That said, we will see that for all $\varepsilon>0$, $\udim\mu \geq f(\te) - \varepsilon$.

Because of obs. \ref{obscuenta}, without loss of generality $\te > (\beta-1)\beta^{-1}$, which means that $C(\te)>1$. Now, take $c, \delta_0>0$. We need to prove that there exist $\de<\de_0$ and $a_k$ such that, for all $r\in[\de^{1/\te}, \de]$ we have $\mu (B(a_k, r)) < c \cdot r^{f(\te)-\varepsilon}$.

Take $a_k$ such that $\de:= a_k^{C(\te)}<\de_0$ and  $\de^{1/\te}\leq a_k-a_{k+1} =: b_k \leq \de$ (to check that such $a_k$ exists, note that $b_k \sim a_k^{(\lambda+1)/\lambda}$ and that $C(\te)< (\lambda+1)\lambda^{-1}< C(\te)/\te$), and such that we have the following two inequalities:
\begin{equation}
\frac{\log a_k}{\log r}(s-1) + 1-\varepsilon< \frac{\log\mu (B(a_k, r))}{\log r}<\frac{\log a_k}{\log r}(s-1) + 1+\varepsilon \:\:, \:\:\text{ for }  r \in(b_k, \de].
\end{equation}
\begin{equation}
\frac{\log\mu(B(a_k, r))}{\log r} = \frac{\beta}{\lambda}\frac{\log a_k}{\log r} \text{ for } r \in [\de^{1/\te}, b_k].
\end{equation}

(notice that, because $C(\te)>1$, $a_k^{C(\te)}=\de < a_k$, so the third case in lemma 
\ref{lemadelacuenta} does not appear). Then, we have to study two cases:
\begin{itemize}
    \item $\underline{r\in[b_k, \delta]}$: In this case, we have the following inequalities:
    \begin{align*}
    \frac{\log \mu(B(a_k, r))}{\log r} & \geq \frac{\log a_k}{\log r}(s-1)+1 - \varepsilon \geq \frac{\log a_k}{\log \de}(s-1)+1-\varepsilon = \frac{s-1}{C(\te)}+1-\varepsilon\\
    &=\frac{\beta \te / \lambda}{\frac{\beta\te}{\lambda}+1-s}-\varepsilon=\frac{\beta\te}{\beta\te + \lambda(1-s)}-\varepsilon = f(\te)-\varepsilon\\
    &\geq \frac{\log c}{\log r}+f(\te) - 2\varepsilon.
    \end{align*}
\item $\underline{r\in[\de^{1/\te}, b_k]}$: In this case,
\begin{align*}
\frac{\log \mu(B(a_k, r))}{\log r} &= \frac{\beta}{\lambda}\frac{\log a_k}{\log r}\geq \frac{\beta}{\lambda}\frac{\log a_k}{\log \de^{1/\te}}=\frac{\beta \te}{\lambda C(\te)} = f(\te) = f(\te) + \varepsilon - \varepsilon\\
& > \frac{\log c}{\log r} + f(\te) - \varepsilon.
\end{align*}
\end{itemize}

This means that $\udim \mu \geq f(\te)-\varepsilon$, and so $\udim\mu \geq \max\{ s, f(\te) \}$.

Now, we need to prove the other inequality, that
\begin{equation}\label{lacuentades}
\udim\mu < \max\{ s, f(\te) \}+\varepsilon \text{ for all } \varepsilon.
\end{equation}
In order to prove this, it is useful to remember that
$$\max\{ s, f(\te) \}=\left\{\begin{matrix}f(\theta), \text{ if }\te\geq (\beta-1)\beta^{-1}.
 \\s,\text{ if } \theta< (\beta-1)\beta^{-1}.
\end{matrix}\right.$$

Then, we will prove inequality \ref{lacuentades} for $\te>(\beta-1)\beta^{-1}$ and $\te\leq (\beta-1)\beta^{-1}$ separately.

\underline{for $\te>(\beta-1)\beta^{-1}$}:
We need to prove that $\udim \mu<f(\te)+\varepsilon$, so it is enough to show that: There exists $\de_0$ such that for all $\de<\de_0 \:,\: a_k$  there exists $r\in[\de^{1/\te}, \de] \:/\:  \mu(B(a_k, r)) \geq r^{f(\te)+\varepsilon}$, or in other words, that
\begin{equation*}
\frac{\log \mu(B(a_k, r))}{\log r}\leq f(\te) + \varepsilon.
\end{equation*}

For this, we will first check that this holds for $a_k$ sufficiently small, and then that, with that in mind, the condition is clear for all $a_k$.

Take $a_k, \de_0$ sufficiently small such that he following holds:
\begin{equation}\label{cuentaeq1}
\frac{\log a_k}{\log r}(s-1)+1-\varepsilon \leq \frac{\log \mu(B(a_k, r))}{\log r} \leq \frac{\log a_k}{\log r}(s-1)+1+\varepsilon \: , \: \forall \: r\: \in [b_k, a_k].
\end{equation}
\begin{equation}\label{cuentaeq2}
s-\varepsilon \leq \frac{\log \mu(B(a_k, r))}{\log r} \leq s + \varepsilon\:,\: \forall \: r \in[a_k, \delta_0]. 
\end{equation}

Then, to verify that the condition holds for these $a_k$, we study the following cases:
\begin{itemize}
    \item $a_k \leq \de:$
    Taking $r= \de$, we have that:
    \begin{equation*}
    \frac{\log \mu(B(a_k, r))}{\log r}\leq s+\varepsilon\leq f(\te) + \varepsilon.
    \end{equation*}
So the condition holds.
    \item $a_k^{C(\te)}\leq \de\leq a_k:$
    Taking $r=\de$, we obtain:
    \begin{equation*}
    \frac{\log \mu(B(a_k, r))}{\log r}\leq \frac{\log a_k}{\log r}(s-1)+1+\varepsilon \leq \frac{\log a_k}{C(\te) \log a_k}(s-1)+1+\varepsilon=f(\te) + \varepsilon.
    \end{equation*}
    \item $\de \leq a_k^{C(\te)}:$ In this case, $\de^{1/\te}\leq a_k^{C(\te)/\te}\leq b_k$, so we can take $r=\de^{1/\te)}$ to obtain:
    \begin{equation*}
    \frac{\log \mu(B(a_k, r))}{\log r}=\frac{\beta \te}{\lambda}\frac{\log a_k}{\log \de}\leq \frac{\beta \te}{\lambda C(\te)}=f(\te) \leq f(\te) + \varepsilon.
    \end{equation*}
\end{itemize}

This means that for sufficiently small $a_k$, te condition holds.

Now, let $a_{k_0}$ be such that the condition holds. First, notice that we can take $\de_0$ such that $\de_0<b_{k_0}$. Also, $a_k-a_{k+1}\leq a_{k-1}-a_k$, which means that for $k<k_0$, $B(a_k, r)$ will contain only a single element (namely, $a_k$), and so we have that, for these $k<k_0$ and $r<\de_0$,
$$\mu(B(a_k, r))=c_k\geq c_k r^{f(\te)
+\varepsilon}.$$
Therefore, taking $c:= \min\{1,c_1, \dots c_{k_0-1}\}$ we conclude that $\udim\mu \leq f(\te)+ \varepsilon.$

\underline{for $\te \leq(\beta-1)\beta^{-1}$}: We need to check that $\udim \mu \leq s+\varepsilon$. Just like in the other case, it is enough to show that: There exists $\de_0$ such that for all $\de<\de_0 \:,\: a_k$  there exists $r\in[\de^{1/\te}, \de] \:/\:  \mu(B(a_k, r)) \geq r^{s+\varepsilon}$, or in other words, that
\begin{equation*}
\frac{\log \mu(B(a_k, r))}{\log r}\leq s + \varepsilon.
\end{equation*}

Just like before, take $a_k, \de_0$ such that \eqref{cuentaeq1} and \eqref{cuentaeq2} hold.

Then, we have to study the following cases:
\begin{itemize}
    \item $a_k\leq \de$: This case follows just like before.
    \item $a_k > \de$: First, note that $C(\te)<1$, so $\de<a_k^{C(\te)}$, and so (for $a_k$ sufficiently small), $\de^{1/\te}<a_k^{C(\te)/\te}\leq b_k$ (because $b_k \sim a_k^{\lambda+1 / \lambda}$). Then, taking $r=\de^{1/\te}$,
    \begin{equation*}
    \frac{\log \mu(B(a_k, r))}{\log r}= \frac{\beta}{\lambda}\frac{\log a_k}{\log \de^{1/\te}}=\frac{\beta \te}{\lambda}\frac{\log a_k}{\log \de}\leq \frac{\beta \te}{\lambda}\leq s.
    \end{equation*}
\end{itemize}

By the same argument as before, we conclude that for all $\te < (\beta-1)\beta^{-1}$, we have $\udim\mu\leq s+ \varepsilon$ for all $\varepsilon>0$.

\end{proof}

\section{Products and dimensions}

    Given two measures $\mu, \nu$, we have that $\supp{\mu \times \nu} = \supp{\mu} \times \supp{\nu}$. It follows that if both are finite and compactly supported, then the product measure possesses those properties as well. Then we can consider products of measures and their dimensions.

\begin{theorem}
    The following inequalities are true:
    $$\ldim (\mu) + \ldim (\nu) \leq \ldim(\mu \times \nu) \leq \udim(\mu \times \nu)\leq \udim (\mu) + \boxs(\nu).$$
\end{theorem}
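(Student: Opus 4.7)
The middle inequality $\ldim(\mu \times \nu) \leq \udim(\mu \times \nu)$ follows directly from Definition~\ref{defdimintmed}, since any witness $(c, \delta_0)$ for the $\udim$ condition at $s$ also yields a sequence (for instance $\delta_n = \delta_0/n$) witnessing the $\ldim$ condition at $s$. For the two remaining inequalities I exploit the equivalent max-norm on $\mathbb{R}^{n+m}$, under which $B((x,y), r) = B(x, r) \times B(y, r)$ and consequently $(\mu \times \nu)(B((x, y), r)) = \mu(B(x, r))\,\nu(B(y, r))$; a bi-Lipschitz change of norm leaves every intermediate dimension invariant, so this reduction is free.

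For the right inequality, fix $s > \udim(\mu)$ and $t > \boxs(\nu)$. Lemma~\ref{obsminkdelta} provides $c_2, \delta_1 > 0$ such that $\nu(B(y, r)) \geq c_2 r^t$ for every $r < \delta_1$ and $y \in \supp{\nu}$, while the definition of $\udim(\mu)$ gives $c_1, \delta_0 > 0$ such that for every $\delta < \delta_0$ and $x \in \supp{\mu}$ some $r \in [\delta^{1/\theta}, \delta]$ satisfies $\mu(B(x, r)) \geq c_1 r^s$. For $\delta < \min(\delta_0, \delta_1)$ and any $(x, y) \in \supp{\mu \times \nu}$, multiplying the two bounds at the common radius $r$ yields $(\mu \times \nu)(B((x, y), r)) \geq c_1 c_2 r^{s+t}$, so $\udim(\mu \times \nu) \leq s + t$. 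Letting $s \downarrow \udim(\mu)$ and $t \downarrow \boxs(\nu)$ finishes this case.

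For the left inequality, let $s > \ldim(\mu \times \nu)$ and choose witnesses $c > 0$, $\delta_n \to 0$, together with radii $r(x, y, n) \in [\delta_n^{1/\theta}, \delta_n]$ so that $\mu(B(x, r)) \nu(B(y, r)) \geq c r^s$ for every $(x, y, n)$. Taking logarithms and dividing by $\log r < 0$ yields the pointwise estimate
$$\frac{\log \mu(B(x, r))}{\log r} + \frac{\log \nu(B(y, r))}{\log r} \leq s + \varepsilon_n,$$
with $\varepsilon_n = (\log c)/\log \delta_n \to 0$. For any choice of ``dual'' point $y_0(x, n)$, the assignment $x \mapsto r(x, y_0(x, n), n)$ is a valid radius function for $\mu$, and analogously for $\nu$. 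I choose $y_0(x, n)$ to maximise the $\nu$-log-quotient, so that the $\mu$-log-quotient inherits the remainder $s - b$, where $b$ approximates the supremum of the $\nu$-log-quotient; passing to a subsequence $n_k$ along which both suprema stabilise produces $a \geq \ldim(\mu)$ and $b \geq \ldim(\nu)$ with $a + b \leq s$.

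The principal obstacle is that $r(x, y, n)$ depends jointly on $(x, y)$, so the supremum over $(x, y)$ of the log-quotients does not decouple as $\sup_x + \sup_y$. My intended remedy is a measurable selection of the ``worst'' $y_0(x, n)$ combined with a Besicovitch-type covering argument to promote the pointwise estimate to the uniform statement required by Lemma~\ref{elctpesunparatodo}. Alternatively, one can try to reduce to Theorem~\ref{teoprincipal} and the corresponding product inequality for sets, by verifying that an optimal measure produced by the theorem for $\supp{\mu} \times \supp{\nu}$ dominates (in the sense of Remark~\ref{obsdesigualdadmedidas}) a product of optimal measures for the two factors.
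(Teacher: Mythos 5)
Your treatments of the middle inequality and of $\udim(\mu\times\nu)\leq\udim(\mu)+\boxs(\nu)$ are correct. Switching to the max-norm is a legitimate shortcut (it replaces the paper's "enlarge and re-range" step by a routine invariance argument), and since $r \leq \delta < \delta_1$ the box-dimension bound for $\nu$ does apply at the common radius $r$ chosen for $\mu$, giving the required product lower bound on the same range $[\delta^{1/\theta},\delta]$.

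The proof of the left inequality $\ldim(\mu)+\ldim(\nu)\leq\ldim(\mu\times\nu)$ has a genuine gap, and you have correctly located it yourself: the radius $r(x,y,n)$ is chosen jointly, so the product lower bound $\mu(B(x,r))\,\nu(B(y,r))\geq cr^s$ gives no control on either factor alone. The "maximise the $\nu$-log-quotient" idea does not repair this, because the resulting radius is still $y$-dependent, the supremum of local log-quotients does not compute $\ldim$ (which is an inf over $s$ with an a.e./uniform quantifier over the support, not a sup of pointwise exponents), and nothing forces the budget $s$ to split additively into an $a\geq\ldim\mu$ and a $b\geq\ldim\nu$. The Besicovitch or measurable-selection remedies are not carried out, and the appeal to Theorem~\ref{teoprincipal} does not apply since that theorem concerns the measure that minimizes $\udim$ over all measures supported on a set, whereas here $\mu\times\nu$ is a fixed measure.

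The issue is really the direction of attack. The paper works from below: reformulate $s<\ldim(\mu)$ as the negated condition ``for all $c>0$ there exists $\delta_0$ such that for all $\delta<\delta_0$ there is an $x\in\supp\mu$ with $\mu(B(x,r))<cr^s$ for every $r\in[\delta^{1/\theta},\delta]$,'' and similarly for $\nu$. Given $\wig c>0$, take $c=\sqrt{\wig c}$, intersect the two $\delta_0$'s, and for each small $\delta$ choose such $x$ and $y$; then for every $r\in[\delta^{1/\theta},\delta]$,
\begin{equation*}
(\mu\times\nu)\bigl(B((x,y),r)\bigr)\leq\mu(B(x,r))\,\nu(B(y,r))<\wig c\,r^{s+t},
\end{equation*}
which is exactly the negated condition showing $s+t\leq\ldim(\mu\times\nu)$. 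Here the product bound and the need for a single $r$ both go in the favourable direction: the bad $x$ and the bad $y$ can be chosen independently, and the containment $B((x,y),r)\subseteq B(x,r)\times B(y,r)$ (Euclidean balls) turns the two upper bounds into one, with no decoupling needed. I recommend you redo the left inequality from this side.
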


\begin{proof}

We will prove the last inequality first:

Let $s> \udim(\mu), t>\boxs(\nu)$. Then:

\begin{enumerate}
   \item There exists $c$ such that $ \nu(B(y, r))\geq c \cdot r^t $ for all $ y\in \supp{\nu}, r \in (0,1).$
    \item There exist $ \widetilde{c}$ and $\delta_0 >0$ such that for all $ \delta<\delta_0, x \in \supp{\mu} , $ there exists $\: \widetilde{r} \in [\delta^{1/\theta}, \delta]$ with $\mu(B(x, \widetilde{r}))\geq \widetilde{c} \cdot \widetilde{r}^s.$
\end{enumerate}

In order to check that $s+t \geq \udim(\mu \times \nu)$, first take $c \cdot \widetilde{c} \; , \text{ and } \; 2\delta_0$.

If $\delta<2\delta_0$, and $(x, y) \in \supp{\mu \times \nu} = \supp{\mu} \times \supp{\nu}$, it is clear that there exists $ \widetilde{r} \in [(\delta/2)^{1/\theta}, \delta/2]$ such that:
\begin{align*}
    (\mu \times \nu) \left ( B((x, y), 2 \cdot \widetilde{r}) \right ) & \geq (\mu \times \nu)\left (  B(x, \widetilde{r}) \times B(y, \widetilde{r})  \right ) = \mu(B(x, \widetilde{r})) \cdot \nu (B(y, \widetilde{r})) \\
    & \geq c \cdot \widetilde{c} \cdot \widetilde{r}^s \cdot \widetilde{r}^t = c \cdot \widetilde{c} \cdot (\widetilde{r})^{s+t} = \frac{c \cdot \widetilde{c}}{2^{s+t}} (2 \cdot \widetilde{r})^{s+t}.
\end{align*}

However, given that $\frac{2}{2^{1/\theta}} \delta^{1/\theta} \leq 2 \cdot \widetilde{r}\leq \delta$, and $\frac{2}{2^{1/\theta}} \leq 1$, it is possible that $2\widetilde{r}$ does not fall within the ''appropriate range''. However, just like before, this can be fixed quite easily:

If $2 \cdot \widetilde{r} \in [\delta^{1/\theta}, \delta]$, we are done. Otherwise, taking $\delta^{1/\theta}$, we have the following:
\begin{align*}
    (\mu \times \nu)(B((x, y), \delta^{1/\theta}))\geq (\mu \times \nu)(B((x, y), 2 \cdot \widetilde{r}))&\geq \frac{c \cdot \widetilde{c}}{2^{s+t}} (2 \cdot \widetilde{r})^{s+t}\\
    &\geq \frac{c \cdot \widetilde{c}}{2^{s+t}} \left ( \frac{2}{2^{1/\theta}} \right )^{s+t} \left ( \delta^{1/\theta} \right )^{s+t}.
\end{align*}

In conclusion, for each pair $\delta<2\delta_0$ and $(x, y)$ there exists $r \in [\delta^{1/\theta}, \delta]$ such that:
$$(\mu \times \nu) (B((x, y), r)) \geq \frac{c \cdot \widetilde{c}}{2^{s+t}} \left ( \frac{2}{2^{1/\theta}} \right )^{s+t} r^{s+t},$$

and so $s+t \geq \udim(\mu \times \nu)$.

For the inequality $\ldim (\mu) + \ldim (\nu) \leq \ldim(\mu \times \nu)$, it is useful to notice that the following two conditions are equivalent:
\begin{enumerate}
    \item For all $c>0 \: , \: \delta_n \rightarrow 0$ there exist $\: n \in \mathbb{N} \: , \: \text{and } x \in \supp{\mu}$ such that for every $r \in [\delta_n^{1/\theta}, \delta_n], \: \mu(B(x, r)) < c \cdot r^s$.
    \item For all $c>0$ there exists $\delta_0$ such that for all $\delta<\delta_0$ there exists $x \in\supp\mu$ such that for every $r\in[\delta^{1/\theta}, \delta], \:  \mu(B(x, r)) < c \cdot r^s.$
\end{enumerate}


Then, take $s < \ldim (\mu)$, and $t < \ldim(\nu)$ as usual. We will prove the inequality $s+t<\ldim(\mu \times \nu)$. By definition, we have the following:

 For all $ c>0 $ there exists $ 
\delta_0 >0 $ such that for all $  \: \delta < \delta_0 $ there exists $ \: x \in \supp{\mu} $ and $ \: y \in \supp{\nu} $ such that, for every $ \: r\in[\delta^{1/\theta}, \delta],$
$$\mu(B(x, r)) < c \cdot r^s \; , \: \nu(B(y, r)) < c \cdot r^t.$$

We need to prove that the same is true for the measure $\mu\times\nu$, with $s+t$.

For this, let $\wig{c}>0$ and take $c:= \sqrt{\wig{c}}$. Then, we can work with the $\delta_0>0$ corresponding to the last property. Then, the following is clear:

For every $\delta < \delta_0$ there exist $x \in \supp{\mu}, y \in \supp{\nu}$ such that, for $r\in[\delta^{1/\theta}, \delta],$
$$\mu(B(x, r))< c \cdot r^s \text{ and }\nu(B(y, r))< c \cdot r^t.$$

Then, taking $(x, y)$ in the product space, we have that for all $ r\in[\delta^{1/\theta}, \delta],$
$$(\mu \times \nu) (B((x, y), r))\leq (\mu \times \nu) ( B(x, r)\times B(y, r) ) = \mu(B(x, r)) \cdot \nu (B(y, r)) < \wig{c} \cdot r^{s+t}.$$
\end{proof}

\begin{remark}

The proof of the first inequality for sets given in \cite{falconer2020intermediate} was fundamentally different, in the sense that it was proven using the Frostman lemma for intermediate dimensions of sets (Prop. 2.3).

\end{remark}

As a final comment, we would like to add a little lemma regarding generalised intermediate dimensions for measures, defined in the same fashion as for intermediate dimensions for sets, using an admissible function $\Phi (\delta)$  as defined in Definition~\ref{defdimintgener}.

\begin{definition} \label{defgeneral}
 Let $\Phi$ be an admissible function. The \textbf{$\Phi$-intermediate dimensions} of $\mu$ are defined by:
    \begin{itemize}
        \item $\dgu (\mu) := \inf \Set{s \geq 0:
    \begin{array}{c}
     \: \exists \: c>0 \text{ and } \: \delta_0 > 0 / \forall \: \delta < \delta_0 \text{ and } x \in \supp{\mu},\:\\*[1mm]
     \exists \: r \in [\Phi(\delta), \delta] \text{ with } \: \mu (B(x, r)) \geq c \cdot r^s
     \end{array} }.$

        \item $\dgl (\mu) := \inf \Set{s \geq 0:
    \begin{array}{c}
     \: \exists \: c>0 \text{ and } \: \delta_n \to 0 / \forall \: n \in \mathbb{N} \text{ and } x \in \supp{\mu},\:\\*[1mm]
     \exists \: r \in [\Phi(\delta), \delta_n] \text{ with } \: \mu (B(x, r)) \geq c \cdot r^s
     \end{array} }.$
    \end{itemize}
\end{definition}

Most of what was said about intermediate dimensions holds for these definitions without much variation in the proofs, provided that there exists $0<c<1$ such that $c\cdot \Phi(2\de)\leq \Phi(\de)$, or  a similar condition holds (which is not necessary for the definition but is important for certain arguments regarding adjusting ranges, which appear in many proofs). In fact, we could have defined these generalised dimensions by taking the condition in definition \ref{defgeneral} as being true for $\mu$-almost all $x\in \supp{\mu}$. 

For the $\Phi$-intermediate dimensions that have this ''doubling'' condition in $\Phi$, Theorem \ref{teoprincipal} is true. That is, for every $\phi$ there exists a measure $\mu$ such that the dimensions of the set and the measure are equal.

On another note, given that $\Phi(\delta)= \delta$ is not admissible, it is not immediately clear that the Minkowski dimensions of a measure can be understood as generalised intermediate dimensions. However, just like in the case for sets (\cite[Prop. 3.4]{Banaji}), it is true.

\begin{proposition}
Let  $\Phi$ be an admissible function such that $\lim_{\delta \to 0} \frac{\log \delta}{\log \Phi (\delta)}=1$, and $\mu$ a measure. Then, $\boxs (\mu) = \dgu (\mu)$.
\end{proposition}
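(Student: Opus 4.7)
The plan is to prove both inequalities separately. The inequality $\dgu(\mu) \leq \boxs(\mu)$ is immediate: if $s > \boxs(\mu)$, then by Lemma~\ref{obsminkdelta} there exist $c, \delta_0 > 0$ with $\mu(B(x,\delta)) \geq c \cdot \delta^s$ for every $x \in \supp{\mu}$ and $\delta < \delta_0$. Since $\Phi$ is admissible we have $\Phi(\delta) \leq \delta$, so the choice $r := \delta \in [\Phi(\delta), \delta]$ witnesses the defining condition of $\dgu(\mu)$ at level $s$. Hence $\dgu(\mu) \leq s$ for all $s > \boxs(\mu)$.

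For the reverse inequality $\boxs(\mu) \leq \dgu(\mu)$, I will fix $s > \dgu(\mu)$ and an arbitrary $\eta > 0$, and show that $s + \eta$ belongs to the set defining $\boxs(\mu)$ (in the form given by Lemma~\ref{obsminkdelta}). By definition of $\dgu(\mu)$, there exist $c, \delta_0 > 0$ such that for every $\delta < \delta_0$ and every $x \in \supp{\mu}$ there is some $r \in [\Phi(\delta), \delta]$ with $\mu(B(x, r)) \geq c \cdot r^s$. Using monotonicity of $\mu$ and the lower bound $r \geq \Phi(\delta)$, I obtain
\[
\mu(B(x, \delta)) \;\geq\; \mu(B(x, r)) \;\geq\; c \cdot r^s \;\geq\; c \cdot \Phi(\delta)^s.
\]

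The remaining task, and the main technical point, is to convert the lower bound $c \cdot \Phi(\delta)^s$ into a bound of the form $\widetilde{c} \cdot \delta^{s+\eta}$. This is precisely where the hypothesis $\lim_{\delta \to 0} \frac{\log \delta}{\log \Phi(\delta)} = 1$ enters: equivalently $\frac{\log \Phi(\delta)}{\log \delta} \to 1$, so for $\delta$ sufficiently small (say $\delta < \delta_1 \leq \delta_0$) we have $\frac{\log \Phi(\delta)}{\log \delta} < 1 + \eta/s$. Multiplying by the negative quantity $\log \delta$ reverses the inequality and gives $\log \Phi(\delta) > (1 + \eta/s) \log \delta$, that is, $\Phi(\delta)^s \geq \delta^{s + \eta}$. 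Combining this with the previous display yields $\mu(B(x, \delta)) \geq c \cdot \delta^{s+\eta}$ for all $\delta < \delta_1$ and $x \in \supp{\mu}$, so by Lemma~\ref{obsminkdelta} we conclude $\boxs(\mu) \leq s + \eta$.

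Finally, letting $\eta \to 0^+$ gives $\boxs(\mu) \leq s$ for every $s > \dgu(\mu)$, hence $\boxs(\mu) \leq \dgu(\mu)$, which combined with the first step completes the proof. The main obstacle is the logarithmic comparison in the last step; once one recognizes that the hypothesis on $\Phi$ is exactly a statement that $\Phi(\delta) = \delta^{1 + o(1)}$, the polynomial factor $\delta^{-\eta}$ can be absorbed and the argument becomes routine.
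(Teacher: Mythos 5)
Your proof is correct, and it takes a genuinely more direct route than the paper. The paper proves the nontrivial inequality $\boxs(\mu) \leq \dgu(\mu)$ indirectly: it shows that for each fixed $\theta < 1$, the hypothesis $\frac{\log\delta}{\log\Phi(\delta)} \to 1$ forces $[\Phi(\delta),\delta]\subseteq[\delta^{1/\theta},\delta]$ for small $\delta$, so $\udim(\mu) \leq \dgu(\mu)$, and then lets $\theta \to 1$ using the continuity of $\theta \mapsto \udim(\mu)$ on $(0,1]$ (Corollary~\ref{corcont}, which rests on Proposition~\ref{propdesigualdad1}). You instead use monotonicity to pass from the optimal radius $r \in [\Phi(\delta),\delta]$ to the full ball $B(x,\delta)$, getting $\mu(B(x,\delta)) \geq c\,\Phi(\delta)^s$, and then convert $\Phi(\delta)^s$ into $\delta^{s+\eta}$ directly from the log-ratio hypothesis, invoking Lemma~\ref{obsminkdelta}. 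This avoids the continuity machinery entirely and is shorter and more self-contained; the paper's argument has the virtue of exhibiting the result as a limiting case of the $\theta$-family, which fits the interpolation narrative. One minor point worth noting explicitly in your write-up: the bound $\frac{\log\Phi(\delta)}{\log\delta} < 1+\eta/s$ implicitly uses $s>0$, which is automatic since $s > \dgu(\mu) \geq 0$ and you may take $s$ strictly positive; if $\dgu(\mu)=0$ the conclusion $\boxs(\mu)\leq s+\eta$ for all $s,\eta>0$ still closes the argument.
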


\begin{proof}

The inequality $\boxs\mu\geq \dgu\mu$ is clear. For the other one, we will show that for all $\theta<1$, $\udim (\mu) \leq \dgu (\mu)$, so that the result follows from Corollary \ref{corcont}.

If the dimension is $\infty$, then the inequality is clear. Else, let $\theta<1$ and $s> \dgu(\mu)$ ( so that there exist $c$ and $\delta_0$ such that for all
$ \delta<\delta_0, \: x\in \supp{\mu} \: $ there exists $ r \in [\Phi(\delta), \delta] \: $ with $ \: \mu(B(x, r)) \geq c r^s.$

By our hypothesis, it is clear that for sufficiently small $\delta$, we have
$$\frac{\log \delta}{\log \Phi (\delta)} \geq \theta\text{, so that }\log(\delta) \leq \log(\Phi(\delta)^\theta) \text{ and } \delta \leq \Phi(\delta)^\theta.$$ But then, 
$$\delta^{1/\theta}\leq \Phi(\delta)\leq \delta,$$ so we can conclude that 
$$\udim (\mu) \leq s\text{ and then }\udim(\mu) \leq \dgu(\mu).$$

\end{proof}

\begin{obs}
A classic example of such $\Phi(\de)$ is $\Phi(\de) := \frac{\de}{-\log \de}$.
\end{obs}

To conclude this section, we would like to study whether these generalised dimensions 'close the gap' between $\dim_H^{*}\mu$ and $\boxs\mu$. Theorem 6.1 \cite{Banaji} proves that the generalised dimensions for sets effectively interpolate between $\dim_H(E)$ and $\boxs(E)$ in the sense that for every $a\in[\dim_H(E), \boxs(E)]$ there exists an admissible function $\Phi_a$ such that $\overline{\dim}^{\Phi_a}(E)=a$, so it is natural to ask if the same holds for the generalised dimensions of measures. However, this is not true, as the next example shows:

\begin{example}
Let $\mu$ be as in Theorem \ref{lacuenta2}, that is, $\mu = \sum p_k \de_{a_k}$ with $a_k := 1/k^\lambda$ and $p_k:=1/k^\beta$, and take $s= (\beta - 1)/\lambda$ like before. Then, $\dim_H^* \mu =0$ because $\dim_H (\{a_k\})=0$, and $\overline{\dim_B}\mu \geq s$. However, no number in the gap $(0, s)$ is attained by the generalised dimensions. For every admissible function $\Phi$ we have
$$\dgu\mu\geq s.$$
In fact, the proof is identical to that of Theorem \ref{lacuenta2}, since it relies on choosing $a_k$ and $\de$ sufficiently small so that $a_k < \de^{1/\te}$. The same argument applies here by instead choosing $a_k < \Phi(\de)$.
\end{example}


\section{A Capacity-theoretic approach}\label{AAAB}

In this section we will study an alternative definition of intermediate dimensions in terms of capacities with respect to certain kernels, which will be useful for obtaining properties regarding the intermediate dimensions of pushforward measures by projections (Theorem \ref{AAA}), which we proceed to define: 

\begin{definition}

Let $f: \Rn \rightarrow \mathbb{R}^m$ continuous be a function, $\mu$ a Radon measure in $X$. The \textbf{pushforward measure} $\mu_f$ is defined by:
$$\mu_f(E) := \mu(f^{-1}(E)).$$
\end{definition}

In particular, we will study the measures $\mu_V:= \mu_{P_V},$ with $P_V$ the orthogonal projection from $\Rn$ onto some $V \in G(n, m)$, and the relations between the intermediate dimensions of $\mu_V$ and $\mu.$ Our analysis will be heavily inspired by the proofs given in \cite{projections}.

We first need to define two functions that play an important role in the projections of sets, which have been studied in \cite{projections}.

\begin{definition}\label{defcapacity}
Let $\theta \in (0, 1]$, $m \in \{1 , \dots, n \}$, $ s \in [0, m]$ and $r \in (0, 1)$. Then, we define the functions $\phi_{r,\theta}^{s,m}, \widetilde{\phi}_{r,\theta}^{s} : \Rn \rightarrow \mathbb{R}$ by:
  $$ \phi_{r,\theta}^{s,m}(x)= \left\{ \begin{array}{lcc} 1 & \text{if} & |x| \leq r \\ \\ r^{s}/|x|^{s} & \text{if} & r < |x| \leq r^{\theta} \\ \\ r^{\theta(m-s)+s}/|x|^{m} & \text{if} &  r^{\theta}\leq |x| \end{array} \right.\quad 
   \widetilde{\phi}_{r,\theta}^{s}(x)= \left\{ \begin{array}{lcc} 1 & \text{if} & |x| \leq r \\ \\ \frac{r^{s}}{|x|^{s}} & \text{if} & r < |x| \leq r^{\theta} \\ \\ 0 & \text{if} &  r^{\theta}\leq |x| \end{array}, \right.$$
   and the ``capacity'' of a measure as
   $$(C_{r,\theta}^{s,m}(\mu))^{-1}=\displaystyle\inf_{x\in \supp \mu}\int \phi_{r,\theta}^{s,m}(x-y)d\mu(y).$$
\end{definition}






The proof of the next lemma is fairly straightforward:

\begin{lemma}\label{nico}
Let $\mu$ be a finite Borel measure and let $\theta\in(0,1] $. If $0<r<1$, then for all $0\leq t\leq s\leq m,$ 
\begin{equation*}
    -(s-t)\leq \left(\frac{\log C_{r,\theta}^{s,m}(\mu)}{-\log r}-s\right)-\left(\frac{\log C_{r,\theta}^{t,m}(\mu)}{-\log r}-t\right)\leq -\theta (s-t).
\end{equation*}
\end{lemma}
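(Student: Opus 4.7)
The plan is to compare the kernels $\phi_{r,\theta}^{s,m}$ and $\phi_{r,\theta}^{t,m}$ pointwise, then pass to integrals, then to logs. The key observation is that for $s \geq t$, these kernels agree on $|y| \leq r$ and differ by a single clean factor on $|y| \geq r^\theta$, with an interpolating region in between.

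First I would establish the pointwise two-sided inequality
\begin{equation*}
r^{(s-t)(1-\theta)} \,\phi_{r,\theta}^{t,m}(y) \;\leq\; \phi_{r,\theta}^{s,m}(y) \;\leq\; \phi_{r,\theta}^{t,m}(y) \qquad \text{for all } y \in \Rn.
\end{equation*}
This is a direct case check using the three pieces of Definition~\ref{defcapacity}. On $|y|\leq r$ both kernels equal $1$. On $r < |y| \leq r^\theta$ the ratio is $(r/|y|)^{s-t}$, which lies in $[(r/r^\theta)^{s-t}, 1] = [r^{(s-t)(1-\theta)}, 1]$ since $r < 1$. On $|y| \geq r^\theta$ the ratio is exactly $r^{\theta(m-s)+s - \theta(m-t) - t} = r^{(s-t)(1-\theta)}$. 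So in all three regions the ratio is sandwiched between $r^{(s-t)(1-\theta)}$ and $1$.

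Next I would integrate the inequality against $d\mu(y-\,\cdot\,)$ and take the infimum over $x \in \supp\mu$. Because the inequalities are pointwise with constants independent of $x$, they transfer to capacities:
\begin{equation*}
r^{(s-t)(1-\theta)} \bigl(C_{r,\theta}^{t,m}(\mu)\bigr)^{-1} \;\leq\; \bigl(C_{r,\theta}^{s,m}(\mu)\bigr)^{-1} \;\leq\; \bigl(C_{r,\theta}^{t,m}(\mu)\bigr)^{-1},
\end{equation*}
equivalently $C_{r,\theta}^{t,m}(\mu) \leq C_{r,\theta}^{s,m}(\mu) \leq r^{-(s-t)(1-\theta)} C_{r,\theta}^{t,m}(\mu)$.

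Finally I would take logarithms, yielding $0 \leq \log C_{r,\theta}^{s,m}(\mu) - \log C_{r,\theta}^{t,m}(\mu) \leq -(s-t)(1-\theta)\log r$, and divide by $-\log r > 0$ to obtain
\begin{equation*}
0 \;\leq\; \frac{\log C_{r,\theta}^{s,m}(\mu) - \log C_{r,\theta}^{t,m}(\mu)}{-\log r} \;\leq\; (s-t)(1-\theta).
\end{equation*}
Subtracting $(s-t)$ from all three terms gives exactly the stated inequality. The only part that requires care is the pointwise case analysis in the first step; once the correct sandwich is in place, the rest is mechanical and does not even require any regularity or support hypothesis on $\mu$ beyond finiteness.
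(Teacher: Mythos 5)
Your proof is correct and follows essentially the same route as the paper: you establish the pointwise sandwich $r^{(s-t)(1-\theta)}\phi_{r,\theta}^{t,m}\leq\phi_{r,\theta}^{s,m}\leq\phi_{r,\theta}^{t,m}$ (which is just the paper's $\phi_{r,\theta}^{s,m}\leq\phi_{r,\theta}^{t,m}\leq r^{(t-s)(1-\theta)}\phi_{r,\theta}^{s,m}$ rearranged), transfer it through the integral and infimum to the capacities, and take logarithms. The only difference is that you spell out the three-region case check that the paper leaves implicit.
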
 
\begin{proof}
Notice that
\begin{equation*}
\phi_{r,\theta}^{s,m}(x)\leq \phi_{r,\theta}^{t,m}(x)\leq r^{(t-s)(1-\theta)}\phi_{r,\theta}^{s,m}(x),
\end{equation*}
and so we have:
\begin{equation*}
C_{r,\theta}^{s,m}(\mu)\geq C_{r,\theta}^{t,m}(\mu) \geq r^{(s-t)(1-\theta)}C_{r,\theta}^{s,m}(\mu),
\end{equation*}
and the result follows
\end{proof}

\begin{remark}\label{remarkexist}
From this lemma it is clear that the function $s \mapsto \limsup_{r\to 0}\frac{\log C_{r,\theta}^{s,m}(\mu)}{-\log r}-s$ is strictly decreasing and continuous.    
\end{remark}

For this section we will also need the following lemma, that shows that the intermediate dimensions of measures and the kernels $ \widetilde{\phi}_{r,\theta}^{s}(x)$ are closely related.

\begin{lemma}\label{primerlema}
Let $\mu$ be a Borel measure and suppose that there exists $c>0$ such that for all $r_0>0$  there exists $r\in(0,r_0)$ such that 
  \begin{equation} \label{Desigualdad1}
\inf_{x\in \supp \mu} \int \widetilde{\phi}_{r,\theta}^{s}(x-y) d\mu(y) \leq c r^{s}.
  \end{equation}
Then $\udim \mu\geq s$. 

Furthermore, if the inequality \eqref{Desigualdad1} is satisfied  for all $r\in(0,1)$ then  $\ldim{\mu} \geq s$.

\end{lemma}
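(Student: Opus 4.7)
The plan is to bridge the capacity-style hypothesis with the ball-based definitions of $\udim$ and $\ldim$ by extracting from the integral estimate a pointwise bound of the form $\mu(B(x,r')) \leq C r'^{s}$, valid throughout $r' \in [r,r^{\theta}]$ at a suitably chosen centre $x\in\supp{\mu}$. The key geometric observation is that for such $r'$, the kernel satisfies $\widetilde{\phi}^{s}_{r,\theta}(x-y) \geq (r/r')^{s}\, \mathbbm{1}_{B(x,r')}(y)$: when $|x-y|\leq r$ one has $\widetilde{\phi} = 1 \geq (r/r')^{s}$, and when $r<|x-y|\leq r'\leq r^{\theta}$ one has $\widetilde{\phi} = r^{s}/|x-y|^{s}\geq r^{s}/r'^{s}$. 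Integrating against $\mu$ yields $\mu(B(x,r'))\leq (r'/r)^{s}\int\widetilde{\phi}^{s}_{r,\theta}(x-y)\,d\mu(y)$. Under the hypothesis, we can choose $x\in\supp{\mu}$ with $\int\widetilde{\phi}^{s}_{r,\theta}(x-y)\,d\mu(y)\leq 2cr^{s}$ (the factor $2$ absorbs the possibly unattained infimum), giving the desired pointwise estimate $\mu(B(x,r'))\leq 2c\,r'^{s}$ for every $r'\in[r,r^{\theta}]$.

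To prove $\udim\mu\geq s$ I verify, for any fixed $s'<s$, that $s'$ fails the defining condition. Given arbitrary $c'>0$ and $\delta_0>0$, I must produce $\delta<\delta_0$ and $x\in\supp{\mu}$ with $\mu(B(x,r'))<c' r'^{s'}$ for every $r'\in[\delta^{1/\theta},\delta]$. Choose $r_0$ small enough that both $r_0^{\theta}<\delta_0$ and $2c\cdot r_0^{\theta(s-s')}<c'$, and let $r\in(0,r_0)$ be given by the hypothesis. Setting $\delta:=r^{\theta}$ turns $[r,r^{\theta}]$ into $[\delta^{1/\theta},\delta]$, and the estimate $\mu(B(x,r'))\leq 2c\,r'^{s-s'}\cdot r'^{s'}< c'r'^{s'}$ is immediate on this interval because $r'\leq r^{\theta}<r_0^{\theta}$. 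Letting $s'\uparrow s$ then gives $\udim\mu\geq s$.

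For $\ldim\mu\geq s$ the structure is parallel, but I exploit the stronger hypothesis (capacity bound valid for every $r\in(0,1)$) to match the quantifier structure: given $c'>0$ and any sequence $\delta_n\to 0$, I need to produce a single $n$ and a point $x\in\supp{\mu}$ such that the estimate holds throughout $[\delta_n^{1/\theta},\delta_n]$. Taking $r_n:=\delta_n^{1/\theta}$ and applying the hypothesis at $r_n$ gives $x_n\in\supp{\mu}$ with $\mu(B(x_n,r'))\leq 2c\,r'^{s}$ for all $r'\in[\delta_n^{1/\theta},\delta_n]$. As soon as $n$ is large enough that $2c\,\delta_n^{s-s'}<c'$, this forces $\mu(B(x_n,r'))<c'r'^{s'}$ throughout the interval, supplying the pair $(n,x_n)$ needed to negate the defining condition.

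All the real content is packed into the geometric lower bound for $\widetilde{\phi}^{s}_{r,\theta}$ in the first paragraph; afterwards the proof is essentially bookkeeping with the definitions of the two dimensions, using Lemma~\ref{elctpesunparatodo} to replace ``$\mu$-a.e.\ $x$'' with ``every $x\in\supp{\mu}$'' in the negations. The only subtlety worth flagging is that the infimum in the hypothesis may be unattained, which is dealt with by absorbing a factor of $2$ into the constant and choosing an approximate minimiser in $\supp{\mu}$.
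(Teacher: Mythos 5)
Your proposal is correct and takes essentially the same route as the paper: the core inequality $\widetilde{\phi}^{s}_{r,\theta}(x-y)\geq (r/r')^{s}\,\mathbbm{1}_{B(x,r')}(y)$ for $r'\in[r,r^{\theta}]$ is exactly the paper's key step, and the substitution $\delta=r^{\theta}$ followed by negating the (``for all $x\in\supp\mu$'') form of the definition with an $\varepsilon$-slack in the exponent is the same bookkeeping. You are slightly more careful than the paper in two small places: you explicitly absorb the possibly unattained infimum into a factor of $2$, and you spell out the quantifier gymnastics for the $\ldim$ case that the paper leaves to the reader.
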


\begin{proof}
We will only prove the $\udim$ part and leave the $\ldim$ to the reader. First, it is clear that if we take $x\in \supp{\mu}$ and $\delta\in[r,r^\theta]$, we have:

$$\mu(B(x,\delta))\left(\frac{r}{\delta}\right)^s \leq \int\left(\frac{r}{\delta}\right)^s  \mathbbm{1}_{B(0,\delta)}(x-y)d\mu(y)\leq  \int \widetilde{\phi}_{r,\theta}^{s}(x-y) d\mu(y).$$ 

Then, 
by hypothesis it is clear that there exists a sequence $r_k \to 0$  and points $x_k\in \supp \mu$ such that  $\mu(B(x_k,\delta))\leq c\delta^s$ for all $\delta\in[r_k,r_k^\theta].$ 

Now, fix $\varepsilon>0$. For all $\widetilde{c}>0$, we have that $\delta^s \leq \widetilde{c} \cdot \delta^{s-\varepsilon}$ for $\delta$ sufficiently small, which implies that $s - \varepsilon<\udim \mu$, and so $\udim \mu \geq s$.
\end{proof}

\begin{remark}\label{tecnicismolower}
For the lower intermediate dimensions, we required in the previous lemma that \eqref{Desigualdad1} is true for all $r$ in an interval. In truth, it is enough that \eqref{Desigualdad1} is true for a geometric sequence, such as $r_n = 2^{-n}$. The way the lemma is stated better shows the contrast between the upper and lower dimensions, though.
\end{remark}

A useful tool in the analysis of the intermediate dimensions of projections of sets is centered around expressing the dimensions in terms of certain ''capacities''. The next Theorem is the measure version of this:

\begin{theorem}\label{teocap}
 Let $\theta\in(0,1]$, $\mu$ a Borel measure with $\udim\mu >0$, and $m>\dim_A \mu$. Then, there exist a unique $s_0 \in [0, m]$ and a unique $t_0\in [0, m]$ such that:
\begin{equation*}
\limsup_{r\to 0}\frac{\log C_{r,\theta}^{s_0,m}(\mu)}{-\log r}-s_0=0 \; \; , \: \: \liminf_{r\to 0}\frac{\log C_{r,\theta}^{t_0,m}(\mu)}{-\log r}-t_0=0.    
\end{equation*}

Furthermore, 
\begin{equation}\label{igualdadconcapacidades}
\udim\mu = s_0 \:\: \text{and} \:\: \ldim\mu=t_0.
\end{equation}


\end{theorem}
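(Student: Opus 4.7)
The plan splits into three parts: (i) existence and uniqueness of $s_0$ (and analogously $t_0$) via the intermediate value theorem; (ii) a uniform comparability between the $\phi_{r,\theta}^{s,m}$- and $\widetilde\phi_{r,\theta}^{s}$-capacities under $m > \dim_A \mu$; and (iii) identifying the threshold with $\udim\mu$ using Lemma~\ref{primerlema} together with an integration-by-parts identity. Throughout, the treatment of $t_0$ and $\ldim\mu$ is obtained by replacing $\limsup$ with $\liminf$.

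For (i), set $g(s) := \limsup_{r\to 0}\frac{\log C_{r,\theta}^{s,m}(\mu)}{-\log r}-s$; this is continuous and strictly decreasing on $[0,m]$ by Remark~\ref{remarkexist}, so I only need to verify $g(0)>0$ and $g(m)<0$. At $s=m$ the middle piece of $\phi_{r,\theta}^{m,m}$ disappears and a dyadic tail estimate under $\dim_A\mu < m$ gives $\int \phi_{r,\theta}^{m,m}(x-y)\,d\mu(y) \asymp \mu(B(x,r))$ uniformly in $x\in\supp{\mu}$, so $g(m)=\boxs\mu - m < 0$. At $s=0$ the kernel is constant on $B(0,r^\theta)$, giving $\int \phi_{r,\theta}^{0,m}(x-y)\,d\mu(y) \asymp \mu(B(x,r^\theta))$; the change of scale $R=r^\theta$ yields $g(0)=\theta\,\boxs\mu \geq \theta\,\udim\mu > 0$ by hypothesis.

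For (ii), the core estimate is the double inequality
\[
\int \widetilde\phi_{r,\theta}^{s}(x-y)\,d\mu(y) \leq \int \phi_{r,\theta}^{s,m}(x-y)\,d\mu(y) \leq (1+K)\int \widetilde\phi_{r,\theta}^{s}(x-y)\,d\mu(y),
\]
uniformly in $r\in(0,1)$, $x\in\supp{\mu}$ and $s\in[0,m]$. The left inequality is immediate. For the right one, a dyadic decomposition of $\{|x-y|>r^\theta\}$ combined with an Assouad bound $\mu(B(x,2^k r^\theta)) \leq K'\,2^{km'}\mu(B(x,r^\theta))$ for some $m' \in (\dim_A\mu, m)$ produces a convergent geometric series and yields $\int (\phi - \widetilde\phi)(x-y)\,d\mu(y) \leq K\,r^{s(1-\theta)}\mu(B(x, r^\theta))$. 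This tail is absorbed into the explicit boundary term of the integration-by-parts identity
\[
\int \widetilde\phi_{r,\theta}^{s}(x-y)\,d\mu(y) = r^{s(1-\theta)}\mu(B(x,r^\theta)) + sr^s\int_r^{r^\theta} R^{-s-1}\mu(B(x,R))\,dR.
\]
Taking $\log/(-\log r)$ then shows that the limsup (and liminf) of $\frac{\log C_{r,\theta}^{s,m}(\mu)}{-\log r}$ is unchanged when $\phi$ is replaced by $\widetilde\phi$ in its definition.

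For (iii), the inequality $\udim\mu \geq s_0$ follows from Lemma~\ref{primerlema}: if $s<s_0$, then $g(s)>0$ produces a sequence $r_k\to 0$ with $\inf_x \int \phi_{r_k,\theta}^{s,m}\,d\mu \leq r_k^s$, which via comparability transfers to $\widetilde\phi$ and the lemma yields $\udim\mu \geq s$. For the reverse, fix $s>s_0$ and $\varepsilon>0$; comparability together with $g(s)<0$ gives $c>0$ such that $\inf_x \int \widetilde\phi_{r,\theta}^{s}\,d\mu \geq cr^s$ for all sufficiently small $r$. Substituting $r=\delta^{1/\theta}$ and supposing for contradiction that $\mu(B(x,R)) < c'R^{s+\varepsilon}$ for \emph{every} $R\in[\delta^{1/\theta},\delta]$, the identity above gives $\int \widetilde\phi \leq C''\,\delta^{s/\theta + \varepsilon}$, contradicting $c\,\delta^{s/\theta}$ once $\delta$ is small. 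This forces some $R\in[\delta^{1/\theta},\delta]$ with $\mu(B(x,R)) \geq c'R^{s+\varepsilon}$, hence $\udim\mu \leq s+\varepsilon$; letting $\varepsilon\searrow 0$ and $s\searrow s_0$ yields $\udim\mu \leq s_0$. The main obstacle is step (ii): without $m > \dim_A\mu$ the far-field of $\phi$ cannot be controlled by local ball measures, and the correspondence with $\udim$ breaks down; this is precisely where the Assouad-dimension hypothesis enters.
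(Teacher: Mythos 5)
Your proposal is correct, and it follows the same broad strategy as the paper (establish that $g(s):=\limsup_{r\to 0}\frac{\log C_{r,\theta}^{s,m}(\mu)}{-\log r}-s$ has a unique zero via Remark~\ref{remarkexist}, then identify that zero with $\udim\mu$ by a two-sided implication), but the execution differs at several points in a way worth noting. For existence, the paper never computes $g$ at the endpoints: it shows that $g(s)>0\Rightarrow s\leq\udim\mu$ and $g(s)<0\Rightarrow\udim\mu\leq s$, and then the sign change follows implicitly from $0<\udim\mu\leq\boxs\mu\leq\dim_A\mu<m$; your explicit boundary estimates $g(0)=\theta\,\boxs\mu>0$ and $g(m)=\boxs\mu-m<0$ are a legitimate alternative and make the existence step self-contained, though they require separately verifying the two-sided dyadic estimates $\int\phi_{r,\theta}^{0,m}\,d\mu\asymp\mu(B(x,r^\theta))$ and $\int\phi_{r,\theta}^{m,m}\,d\mu\asymp\mu(B(x,r))$ (both fine under $m>\dim_A\mu$). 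The more significant divergence is in the upper bound $\udim\mu\leq s_0$: the paper works directly with the kernel $\phi$, partitions the support into dyadic annuli $B(x,2^k r)\setminus B(x,2^{k-1}r)$, splits the resulting sum into three ranges, and applies a pigeonhole (arithmetic-mean) argument with a $\frac{1}{D+1}\gtrsim r^\varepsilon$ loss that is absorbed at the end; your route instead proves a uniform two-sided comparability $\int\widetilde\phi\,d\mu\leq\int\phi\,d\mu\leq(1+K)\int\widetilde\phi\,d\mu$ (the far-field tail being controlled by the Assouad bound with exponent $m'\in(\dim_A\mu,m)$ and then absorbed into the boundary term $r^{s(1-\theta)}\mu(B(x,r^\theta))$), and then runs an exact integration-by-parts identity for $\int\widetilde\phi\,d\mu$ to obtain the contradiction. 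Your modularization is arguably cleaner (the comparability lemma and the IBP identity cleanly separate the role of the Assouad hypothesis from the role of the truncated Riesz kernel), at the cost of introducing two auxiliary estimates; the paper's annulus/pigeonhole argument is more direct but leaves the logarithmic loss to be cleaned up by an $\varepsilon$. Both are sound, and your lower-bound step is essentially the paper's argument stated via Lemma~\ref{primerlema} (which the paper effectively reproves inline).
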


\begin{proof}

Like before, we only prove the first equality. The uniqueness of such $s_0$ can be easily deduced from Remark \ref{remarkexist}. The existence (and the equality \eqref{igualdadconcapacidades}) can be easily deduced from the following two arguments:

First, suppose that $\limsup_{r\to 0}\frac{\log C_{r,\theta}^{s,m}(\mu)}{-\log r}-s>0$. Then, for all $c>0$  there exists a subsequence $\{r_k\}$  that tends to $0$ and a sequence of points in the support of $\mu$ , $\{x_k\}$, such that:
$$\int \phi_{r_k,\theta}^{s,m}(x_k-y)d\mu(y)\leq c r_k^s.$$

Then for all $\delta\in[r_k,r_k^{\theta}]$ we have 
$$\mu(B(x_k,\delta))=\int\mathbbm{1}_{B(0,\delta)}(x_k -y)d\mu(y)\leq \left(\frac{\delta}{r_k}\right)^s \int\phi_{r_k,\theta}^{s,m}(x_k-y)d\mu(y)\leq C\delta^{s},$$ which implies that $s\leq \udim\mu$.

Conversely, suppose that $\limsup_{r\to 0}\frac{\log C_{r,\theta}^{s,m}(\mu)}{-\log r}-s<0$. By definition, there exists $\de>0$ such that, for every $r\leq \de$, $$r^s<\inf_{x\in\supp{\mu}}\int \phi_{r,\theta}^{s,m} (x-y)d\mu(y).$$

Then, observing that for every pair $r\leq \wig{r}$ we have $\phi_{r,\theta}^{s,m}(x)\leq \phi_{\wig{r},\theta}^{s,m}(x)$ we obtain that for some $C>0$ independent of $x$ and $r$,
    \begin{equation}\label{ecuacion1}
        Cr^s\leq \int \phi_{r,\theta}^{s,m}(x-y)d\mu(y) \:\:,\:\: \forall \: r \in (0, 1) \; , \; x\in\supp{\mu}.
    \end{equation} 

Now, without loss of generality we can suppose $|\supp \mu|=1.$ Let $D=\lceil\log_2 (1/r)\rceil$ and $M$ the unique integer satisfying $2^{M-1}r<r^\theta\leq 2^M r$, and also let $r_0$ sufficiently small to ensure that $2\leq M\leq D-2$ for all $0<r\leq r_0$. 

Taking $x\in \supp\mu$, using inequality  \eqref{ecuacion1} and partitioning the support of $\mu$ on consecutive annuli  of the form $B(x,2^k r)\setminus B(x,2^{k-1}r)$, $(1\leq k\leq D)$, we obtain the following:

\begin{align*}
        Cr^s&\leq \int \phi_{r,\theta}^{s,m}(x-y)d\mu(y)\\
        &\leq \mu(B(x,r))+\sum_{k=1}^{D}\int_{ B(x,2^k r)\setminus B(x,2^{k-1}r) }\phi_{r,\theta}^{s,m}(x-y)d\mu(y) \\
        &\leq \mu(B(x,r))+\sum_{k=1}^{M}\int_{ B(x,2^k r)\setminus B(x,2^{k-1}r) }2^{-(k-1)s}d\mu(y)\\
        &\quad + \sum_{k=M+1}^{D}\int_{ B(x,2^k r)\setminus B(x,2^{k-1}r) }r^{\theta(m-s)+s}(2^{k-1}r)^{-m}d\mu(y)\\
        &\leq \mu(B(x,r))+\sum_{k=1}^{M}\mu(B(x,2^k r))2^{-(k-1)s}d\mu(y)\\
        & \quad +\sum_{k=M+1}^{D} \mu(B(x,2^k r))r^{(\theta-1)(m-s)}2^{-m(k-1)}.\\
    \end{align*}

 Then for each $x$ in the support of $\mu$, at least one of the terms of the above sums  is at least the arithmetic mean of the total sum. Therefore, we need to consider three cases:

\begin{enumerate}
    \item $\displaystyle \frac{Cr^s}{D+1}\leq \mu(B(x,r)). $
    \item $\displaystyle\frac{Cr^s}{D+1} \leq 2^{s} \mu(B(x,2^{k}r))2^{-ks}=4^s \mu(B(x,2^{k}r)) |B(x,2^{k}r)|^{-s}r^s $
for some $k\in\{1,...,M\}.$
    \item One term of the last sum is bigger or equal than the mean, i.e. (noticing that, by hypothesis, $m>\dim_A \mu$) for some $k\in \{M+1,...,D\}$ and a constant $c>0$,
    \begin{align*}
\frac{Cr^s}{D+1}&\leq r^{(\theta-1)(m-s)}\mu(B(x,2^{k}r))2^{-(k-1)m}\\
&\leq   r^{(\theta-1)(m-s)} \left(\frac{c2^{k}r}{r^\theta}\right)^{m}\mu(B(x,r^\theta))2^{-(k-1)m}\\
&\leq \left(2c\right)^m r^{-\theta s} r^{s}\mu(B(x,r^\theta).
\end{align*}
\end{enumerate}

Then, since for all $\varepsilon>0$ we have $\displaystyle 1\leq \frac{r^{-\varepsilon}}{D+1}$ for all sufficiently small $r$, we have that for all $\varepsilon>0$ there exist $C>0$ and $r_0$ such that for all $r\in(0,r_0)$ and all $x\in \supp\mu$ there exists $\delta\in[r,r^{\theta}]$ such that $C\delta^{s+\varepsilon}\leq \mu(x,\delta)$, which implies
$$\udim \mu \leq s+\varepsilon$$
and so the result follows by letting $\varepsilon$ tend to $0$.
\end{proof}

We now proceed to establish relations between the intermediate dimensions of $\mu$ and the push-forward measures $\mu_V$. First of all, a useful lemma will be stated:

\begin{lemma}(\cite[Lemma 5.3]{projections})\label{lemadelkernel}
For all $m\in\{1,...,n-1\}$ and $0\leq s<m$ there exist positive constants $a:=a_{n,m}$ and $b:=b_{n,m}$, depending only on $s$ such that for all $x\in\Rn$, $\theta\in(0,1]$ and $0<r<1/2$,
$$a_{n,m}\int \widetilde{\phi}_{r,\theta}^s (P_V x)d\gamma_{n,m}(V)\leq \phi_{r,\theta}^{s,m}(x)\leq b_{n,m}\int \widetilde{\phi}_{r,\theta}^s (P_V x)d\gamma_{n,m}(V).$$
\end{lemma}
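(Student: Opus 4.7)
The plan is to reduce the two-sided inequality to the classical integral-geometric formula for orthogonal projections on $G(n,m)$ and then compare the resulting estimate with the three-piece definition of $\phi_{r,\theta}^{s,m}$. The key input is that for every bounded Borel $f\geq 0$,
$$\int_{G(n,m)} f(|P_V x|)\,d\gamma_{n,m}(V) \;=\; c_{n,m}\,|x|^{m-n}\int_0^{|x|} f(u)\,u^{m-1}(|x|^2-u^2)^{(n-m-2)/2}\,du,$$
from which the useful slice estimate $\gamma_{n,m}(\{V:|P_V x|\leq u\}) \asymp (u/|x|)^m$ (for $0<u\leq |x|$) follows, with constants depending only on $n$ and $m$. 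Since $x\mapsto \int \widetilde{\phi}_{r,\theta}^{s}(P_V x)\,d\gamma_{n,m}(V)$ is radial, the analysis reduces to a comparison in the variable $|x|$, and splits naturally into the three regimes that define $\phi_{r,\theta}^{s,m}$.

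In the regime $|x|\leq r$, every projection satisfies $|P_V x|\leq r$, so $\widetilde{\phi}_{r,\theta}^{s}(P_V x)\equiv 1$, and both sides equal $1$. In the regime $r<|x|\leq r^\theta$, the vanishing piece of $\widetilde{\phi}$ is never triggered because $|P_V x|\leq |x|\leq r^\theta$. Breaking the integral along $\{|P_V x|\leq r\}$ and $\{r<|P_V x|\leq |x|\}$ and using the slice estimate together with a layer-cake argument yields
$$\int \widetilde{\phi}_{r,\theta}^{s}(P_V x)\,d\gamma_{n,m}(V) \;\asymp\; \bigl(r/|x|\bigr)^m + r^s/|x|^s.$$
Since $s<m$ and $r<|x|<1$, the second term dominates, and it equals $\phi_{r,\theta}^{s,m}(x)$.

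In the regime $|x|\geq r^\theta$, the piece $\{|P_V x|>r^\theta\}$ annihilates the corresponding portion of the integrand, and the remaining contributions from $\{|P_V x|\leq r\}$ and $\{r<|P_V x|\leq r^\theta\}$ behave, respectively, like $(r/|x|)^m$ and
$$\frac{c_{n,m}\,r^s}{|x|^m}\int_r^{r^\theta} u^{m-1-s}\,du \;\asymp\; \frac{r^{s+\theta(m-s)}}{|x|^m}.$$
Since $\theta\leq 1$ forces $s+\theta(m-s)\leq m$ and hence $r^{s+\theta(m-s)}\geq r^m$, the second contribution dominates and matches $\phi_{r,\theta}^{s,m}(x)=r^{\theta(m-s)+s}/|x|^m$.

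The main obstacle is propagating the constants uniformly in $r$, $\theta\in(0,1]$, and $x$. The factor $(|x|^2-u^2)^{(n-m-2)/2}$ is comparable to $|x|^{n-m-2}$ (up to absolute constants) only when $u$ stays away from $|x|$; ensuring this is a large part of the role played by the hypothesis $0<r<1/2$, and near the transitions $|x|\in\{r,r^\theta\}$ one should invoke continuity in $|x|$ of both sides to glue the estimates. Likewise, the integral $\int_r^{r^\theta} u^{m-1-s}\,du$ contributes a factor $1/(m-s)$, which is why the constants in the statement are allowed to depend on $s$ as long as $s$ stays strictly below $m$.
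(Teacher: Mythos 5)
Note first that the paper does not actually prove this lemma; it is stated as a direct citation of \cite[Lemma 5.3]{projections}, so there is no in-paper argument to compare against. Your integral-geometric strategy --- disintegrating over $G(n,m)$ via the distribution of $|P_V x|$, extracting the slice estimate $\gamma_{n,m}(\{V:|P_V x|\le u\})\asymp (u/|x|)^m$, and matching the three pieces of $\phi_{r,\theta}^{s,m}$ --- is indeed the standard route taken in the cited reference. (A small slip: the prefactor in your disintegration formula should be $|x|^{2-n}$ rather than $|x|^{m-n}$; with that correction the formula has total mass $1$, and your stated slice estimate and subsequent computations are consistent with the corrected version.)

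There is, however, a genuine gap in the regime $|x|\ge r^\theta$. You assert both that $\frac{c_{n,m}r^s}{|x|^m}\int_r^{r^\theta}u^{m-1-s}\,du\asymp r^{s+\theta(m-s)}/|x|^m$ and that this ``second contribution dominates'' $(r/|x|)^m$; neither holds uniformly in $\theta\in(0,1]$. Since $\int_r^{r^\theta}u^{m-1-s}\,du=\frac{r^{\theta(m-s)}-r^{m-s}}{m-s}$, the middle-annulus contribution equals $\frac{c_{n,m}}{(m-s)|x|^m}\bigl(r^{s+\theta(m-s)}-r^m\bigr)$, whose ratio to $r^{s+\theta(m-s)}/|x|^m$ carries the factor $1-r^{(1-\theta)(m-s)}$, which tends to $0$ as $\theta\to1^-$; at $\theta=1$ that piece vanishes entirely while $\phi_{r,1}^{s,m}(x)=(r/|x|)^m$ is supplied precisely by the $\{|P_V x|\le r\}$ term you discarded. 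The fix is to keep both pieces and compare the \emph{sum} to $\phi_{r,\theta}^{s,m}$. Writing $\beta=s+\theta(m-s)\in[s,m]$ so that $r^\beta\ge r^m$, the total integral is comparable to $\frac{1}{|x|^m}\bigl(\frac{r^m}{m}+\frac{r^\beta-r^m}{m-s}\bigr)=\frac{1}{|x|^m}\bigl(\frac{r^\beta}{m-s}-\frac{s\,r^m}{m(m-s)}\bigr)$, which lies between $\frac{r^\beta}{m|x|^m}$ and $\bigl(\frac{1}{m}+\frac{1}{m-s}\bigr)\frac{r^\beta}{|x|^m}$. This yields the required two-sided comparison with $\phi_{r,\theta}^{s,m}(x)=r^\beta/|x|^m$ uniformly in $\theta$ and $r$, with constants depending only on $n$, $m$, $s$ and degenerating only as $s\to m$, consistent with the hypothesis $s<m$.
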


Given that projections are in fact Lipschitz functions, the next lemma can be directly applied to projections:

\begin{lemma}
    Let $\mu$ be a Borel measure in $\Rn$ (we always assume that $\mu$ is finite, with compact support), and $f: \Rn \rightarrow \mathbb{R}^m$ a Lipschitz function. Then, $$\udim(\mu_f) \leq \udim (\mu)$$ with $\mu_f$ the push-forward measure. The same result is true for the lower intermediate dimensions.
\end{lemma}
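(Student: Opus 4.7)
The plan is to reduce the statement to the ball-covering characterization of intermediate dimensions (the form given in Lemma \ref{elctpesunparatodo}) and exploit the Lipschitz condition to transfer ball-measure lower bounds from $\mu$ to $\mu_f$.

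First I would verify the set-theoretic setup: if $f:\Rn\to\mathbb{R}^m$ is $L$-Lipschitz, then $f(B(x,r))\subseteq B(f(x),Lr)$, so $B(x,r)\subseteq f^{-1}(B(f(x),Lr))$ and therefore
\[
\mu(B(x,r))\leq \mu\bigl(f^{-1}(B(f(x),Lr))\bigr)=\mu_f(B(f(x),Lr)).
\]
Moreover, since $\mu$ has compact support and $f$ is continuous, one can check that $\supp \mu_f\subseteq f(\supp\mu)$: if $y\in\supp\mu_f$, every open neighbourhood $U$ of $y$ satisfies $\mu(f^{-1}(U))>0$, forcing $f^{-1}(U)\cap\supp\mu\neq\emptyset$, and compactness gives $y\in f(\supp\mu)$.

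Next I would take $s>\udim(\mu)$ and produce constants $c,\delta_0>0$ such that for every $x\in\supp\mu$ and every $\delta<\delta_0$ there exists $r\in[\delta^{1/\theta},\delta]$ with $\mu(B(x,r))\geq c\cdot r^s$. Assume WLOG $L\geq 1$. Given $y\in\supp\mu_f$, pick $x\in\supp\mu$ with $f(x)=y$ and apply the assumption to $\delta':=\delta/L$ (for $\delta<L\delta_0$), obtaining $r\in[(\delta/L)^{1/\theta},\delta/L]$ with $\mu(B(x,r))\geq c\cdot r^s$. Then
\[
\mu_f(B(y,Lr))\geq \mu(B(x,r))\geq c\cdot r^s = \frac{c}{L^s}(Lr)^s,
\]
and clearly $Lr\leq\delta$. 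The only subtlety is that $Lr\geq L^{1-1/\theta}\delta^{1/\theta}$, which is below $\delta^{1/\theta}$ when $\theta<1$.

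The (minor) obstacle is the same range-correction trick used throughout the paper (e.g.\ in the proof of Lemma \ref{elctpesunparatodo} and Proposition \ref{propdesigualdad1}): when $Lr<\delta^{1/\theta}$ the radius is too small, so I replace it by $\delta^{1/\theta}$ and use monotonicity,
\[
\mu_f\bigl(B(y,\delta^{1/\theta})\bigr)\geq \mu_f(B(y,Lr))\geq \frac{c}{L^s}(Lr)^s\geq \frac{c}{L^s}\bigl(L^{1-1/\theta}\bigr)^s\bigl(\delta^{1/\theta}\bigr)^s = c\,L^{-s/\theta}\bigl(\delta^{1/\theta}\bigr)^s.
\]
Setting $\tilde c:=c\,L^{-s/\theta}$ and $\tilde\delta_0:=L\delta_0$, every $y\in\supp\mu_f$ and $\delta<\tilde\delta_0$ admit some $r'\in[\delta^{1/\theta},\delta]$ with $\mu_f(B(y,r'))\geq\tilde c\cdot(r')^s$. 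Hence $s\geq\udim(\mu_f)$, and letting $s\downarrow\udim(\mu)$ yields $\udim(\mu_f)\leq\udim(\mu)$. The analogous argument would give the same inequality for $\ldim$ verbatim, replacing ``for all $\delta<\delta_0$'' by ``along a sequence $\delta_n\to 0$''.
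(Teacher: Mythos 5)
Your proof is correct and follows essentially the same route as the paper's: reduce to the ball characterization, pull back the ball estimate through the Lipschitz map with constant $L$, and then apply the standard range-correction trick (replacing a too-small radius by $\delta^{1/\theta}$). The only cosmetic differences are notation ($L$ vs.\ $\lambda$) and that you fold the two cases into the single constant $cL^{-s/\theta}$ whereas the paper writes the explicit minimum, but since $L\geq 1$ these coincide.
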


\begin{proof}
First notice that (by Theorem 1.18, \cite{mattilafrac}), we have that $$\supp{\mu_f}= f(\supp \mu),$$ and let $\lambda>1$ such that $d(f(x), f(y))\leq \lambda d(x, y).$ We also have:
$$f^{-1}(B(f(x), \lambda  r))  \supseteq B(x, r).$$

Let $s>\udim \mu$, so that there exist $c$ and $\delta_0>0$ such that, for all $\delta<\delta_0, x\in \supp\mu$, there exists $r>0$ such that:
\begin{equation*}\label{coso}
r\in [\delta^{1/\theta}, \delta] \: ,\:  \mu(B(x, r)) \geq c r^s.    
\end{equation*}
We want to see that the same holds for $\mu_f$. For this, we show that there exist $c'$ and $\de_0^{'}$ such that, for all $\de<\de_0^{'}\: $ and $\: y \in \supp{\mu_f}$, there exists $r>0$ such that:

\begin{equation*}\label{coso}
r\in [\delta^{1/\theta}, \delta] \: ,\:  \mu_{f} (B(y, r)) \geq c' \cdot r^s.    
\end{equation*}

Take $\de^{'}_0, \de$ and $y$ as follows: $\de_0^{'} =\lambda  \delta_0$, $\delta<\delta_0^{'}$, and $y \in \supp{\mu_f} = f(\supp\mu)$. Then, we have that $y=f(x)$ for some $x \in \supp \mu$ and so, there exists $r \in \left[\frac{\delta^{1/\theta}}{\lambda^{1/\theta}}, \frac{\delta}{\lambda} \right]$ with $\mu(B(x, r))\geq c r^s.$

Then, $\lambda r \in \left[  \frac{\lambda}{\lambda^{1/\theta}} \delta^{1/\theta}, \delta   \right]$ and also:

$$\mu_f (B(y, \lambda r)) \geq \mu( B(x, r) )\geq c \cdot r^s = \frac{c}{\lambda^s} (\lambda  r)^s,$$
so $\lambda r$ satisfies the desired inequality. However, $\lambda \cdot r$ might not fall in the ''appropriate'' range, because $\frac{\lambda}{\lambda^{1/\theta}} \leq 1$. To solve this, we proceed by cases just like before:

\begin{itemize}
    \item $\lambda r\geq \delta^{1/\theta}$: In this case, there is nothing to be done, because the same $r$ works.
    \item $\lambda r\leq \delta^{1/\theta}$:
    In this case, the ''$r$'' we are looking for will be $\delta^{1/\theta}$:

    $$\mu_f (B (\delta^{1/\theta})) \geq \mu_f (B(\lambda  r))\geq \frac{c}{\lambda^s} (\lambda r)^s\geq \frac{c}{\lambda^{s/\theta}} (\delta^{1/\theta})^s$$
\end{itemize}

So, changing the constant $c$ by $c' = \min\{\frac{c}{\lambda^{s/\theta}}, \frac{c}{\lambda^{s}}\}$, we have that $s>\udim{\mu_f}$.
\end{proof}

Applying this lemma to the projections $P_V$ for $V \in G(n, m)$, we obtain:

\begin{corollary}
$\udim{\mu_V}\leq \udim \mu$ for all $V \in G(n, m)$.
\end{corollary}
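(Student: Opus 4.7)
The plan is to invoke the preceding lemma applied to the family of orthogonal projections. The key observation is that for each $V \in G(n,m)$, the orthogonal projection $P_V : \Rn \to V$ is $1$-Lipschitz: writing $x = P_V(x) + P_{V^\perp}(x)$, the Pythagorean theorem gives
\[
|P_V(x) - P_V(y)|^2 \leq |P_V(x) - P_V(y)|^2 + |P_{V^\perp}(x) - P_{V^\perp}(y)|^2 = |x-y|^2.
\]
After identifying $V \cong \mathbb{R}^m$ via an isometric isomorphism, $P_V$ becomes a Lipschitz function $\Rn \to \mathbb{R}^m$, and by definition the push-forward measure $\mu_V$ is exactly $\mu_{P_V}$ in the notation of the preceding lemma.

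Thus I would simply apply the preceding lemma with $f = P_V$. The strict requirement $\lambda > 1$ in the statement of that lemma is cosmetic for our purposes: a $1$-Lipschitz map satisfies the hypothesis with any $\lambda > 1$ (e.g. $\lambda = 2$), so the lemma directly gives $\udim(\mu_V) \leq \udim(\mu)$. Since the argument only uses that $P_V$ is $1$-Lipschitz, the conclusion holds uniformly for every $V \in G(n,m)$. There is no genuine obstacle here; the corollary is simply the specialization of Lipschitz invariance of $\udim$ for push-forward measures to the specific family of orthogonal projections, which is the natural setting for studying dimensions of projected measures in the subsequent results.
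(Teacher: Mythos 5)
Your proposal is correct and matches the paper's approach exactly: the corollary is obtained by applying the preceding Lipschitz push-forward lemma to $f = P_V$, and your observation that the $\lambda > 1$ condition is harmless for a $1$-Lipschitz map is a fair reading of that lemma.
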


Now, in order to complete our analysis of projections we will prove that, under certain hypothesis, $\udim(\mu_V) = \udim(\mu)$ for almost all $V\in G(n, m)$. We need the following lemma: 

\begin{lemma}\label{lemadelctp}
    Let $\mu$ be a Borel measure with compact support in $\Rn$. Let $\theta\in (0,1]$, $m\in\{1,...,n-1\}$ and $t\in(0,m)$.
    
  If  $\displaystyle\limsup_{r\to 0}\frac{\log C_{r,\theta}^{t,m}(\mu)}{-\log r} - t \geq0$ then $\udim \mu_V\geq  t$ for $\gamma_{n,m}$-almost all $V\in G(n,m).$
    
\end{lemma}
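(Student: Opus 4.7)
My plan is to reduce the claim to Lemma \ref{primerlema} applied to the pushforward $\mu_V$, via a Borel--Cantelli argument over $G(n,m)$ that uses the kernel comparison in Lemma \ref{lemadelkernel}. First I would unpack the hypothesis: there exist $\varepsilon>0$ and a sequence $r_k\to 0$ with $\log C_{r_k,\theta}^{t,m}(\mu)/(-\log r_k) - t > \varepsilon$, equivalently $C_{r_k,\theta}^{t,m}(\mu)^{-1} < r_k^{t+\varepsilon}$. After passing to a subsequence I may assume $\sum_k r_k^{\varepsilon} < \infty$ (e.g.\ $r_k \leq 2^{-k/\varepsilon}$). Since the capacity is defined as an infimum over $\supp{\mu}$, for each $k$ there exists $x_k \in \supp{\mu}$ with $\int \phi_{r_k,\theta}^{t,m}(x_k - y)\,d\mu(y) \leq 2 r_k^{t+\varepsilon}$.

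Next, combining the left-hand inequality of Lemma \ref{lemadelkernel} with Fubini and the definition of the pushforward yields
\begin{equation*}
\int_{G(n,m)} \int \widetilde{\phi}_{r_k,\theta}^{t}(P_V x_k - z)\,d\mu_V(z)\,d\gamma_{n,m}(V) \leq \frac{1}{a_{n,m}}\int \phi_{r_k,\theta}^{t,m}(x_k - y)\,d\mu(y) \leq \frac{2}{a_{n,m}}\,r_k^{t+\varepsilon}.
\end{equation*}
Markov's inequality then bounds the $\gamma_{n,m}$-measure of the exceptional set $E_k := \{V \in G(n,m) : \int \widetilde{\phi}_{r_k,\theta}^{t}(P_V x_k - z)\,d\mu_V(z) > r_k^t\}$ by a constant times $r_k^\varepsilon$. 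By the choice of $r_k$ these measures are summable, so Borel--Cantelli yields that $\gamma_{n,m}$-almost every $V$ lies in only finitely many $E_k$.

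For such a generic $V$ and all sufficiently large $k$, the point $P_V x_k$ lies in $\supp{\mu_V} = P_V(\supp{\mu})$ and satisfies $\int \widetilde{\phi}_{r_k,\theta}^{t}(P_V x_k - z)\,d\mu_V(z) \leq r_k^t$; in particular the infimum of this quantity over $\supp{\mu_V}$ is at most $r_k^t$. Applying Lemma \ref{primerlema} to $\mu_V$ with $s=t$ along the sequence $\{r_k\}$ then gives $\udim\mu_V \geq t$. The only delicate step is thinning the sequence $\{r_k\}$ enough for Borel--Cantelli to apply; everything else is a direct combination of the kernel estimate, Fubini, and Markov's inequality.
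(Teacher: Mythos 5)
Your proposal is correct and follows essentially the same route as the paper's proof: both thin the sequence $r_k$ so that $\sum_k r_k^{\varepsilon}<\infty$, pick near-optimizing points $x_k\in\supp\mu$ for the capacity infimum, apply Lemma~\ref{lemadelkernel} and Fubini to transfer the bound to the $V$-averaged $\widetilde{\phi}$-integral over $\mu_V$, and conclude for $\gamma_{n,m}$-almost every $V$ that the hypothesis of Lemma~\ref{primerlema} holds with $s=t$. The only cosmetic difference is that you phrase the almost-everywhere step as Markov plus Borel--Cantelli, whereas the paper integrates the weighted series of terms and deduces a.e.\ finiteness (so the terms are bounded by a $V$-dependent constant $M_V$), which is the same argument in different clothing.
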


\begin{proof}

Let $t'>0$ such that $\displaystyle\limsup_{r\to 0}\frac{\log C_{r,\theta}^{t',m}(\mu)}{-\log r} - t' \geq 0$ and let $t\in(0,t')$. Then, we have that $\limsup\frac{\log C_{r,\theta}^{t,m}(\mu)}{-\log r} - t> 0$.

Now, let $\varepsilon>0$ sufficiently small such that $\limsup\frac{\log C_{r,\theta}^{t,m}(\mu)}{-\log r} - t> \varepsilon$ and let $(r_k)_{k\in\mathbb{N}}$ be a sequence tending to $0$ such that $0<r_k\leq 2^{-k}$ and 
$$\lim_{k\to\infty} \frac{\log C_{r_k,\theta}^{t,m}(\mu)}{-\log r_k} = \limsup_{r\to 0} \frac{\log C_{r,\theta}^{t,m}(\mu)}{-\log r} .$$ 

Also, let $\lambda_k:=\displaystyle\inf_{x\in \supp\mu}\int \phi_{r_k,\theta}^{t,m}(x-y)d\mu(y) = \left (C_{r_k, \theta}^{t, m}\mu \right)^{-1}.$ Then there exists a subsequence $\{r_{k_{j}}\}_j$ such that $\lambda_{k_j}\leq  r_{k_j}^{t+\varepsilon}$.

Then, by lemma \ref{lemadelkernel}, for all $j$ there exists $x_j$ such that:

$$\iint  \widetilde{\phi}_{r_{k_j},\theta}^{t}(P_V x_j - P_V y)d\gamma_{n,m}(V)d\mu(y)\leq a^{-1}(1+\varepsilon)\lambda_{k_j},$$ so 
    $$\iint  \lambda_{k_j}^{-1}r_{k_j}^{\varepsilon}\widetilde{\phi}_{r_{k_j},\theta}^{t}(P_V x_j - P_V y)d\gamma_{n,m}(V)d\mu(y)\leq a^{-1}(1+\varepsilon)r_{k_j}^{\varepsilon}.$$

Therefore, summing and using Fubini's Theorem, we obtain the following:

\begin{align*}
\int \sum_{j=1}^{\infty}\int \lambda_{k_j}^{-1}r_k^{\varepsilon}\widetilde{\phi}_{r_{k_j},\theta}^{t}(P_V x_j - P_V y)d\mu(y)d\gamma_{n,m}(V)\leq a^{-1}(1+\varepsilon)\sum_{j=1}^{\infty}r_{k_j}^{\varepsilon}<\infty
\end{align*}

since $r_{k_j}^{\varepsilon}\leq 2^{-k_j \varepsilon}$. Then, for almost all $V$ there exists $M_V$ such that 
$$\int \widetilde{\phi}_{r_{k_j},\theta}^{t}(v_j - u)d\mu_V(u)=\int \widetilde{\phi}_{r_{k_j},\theta}^{t}(P_V x_j - P_V y)d\mu(y)<M_V \lambda_{k_j} r_{k_j}^{-\varepsilon}$$ where $v_j=P_Vx_{j}$. Then, $$\displaystyle\inf_{v\in \supp{\mu_V}}\int \widetilde{\phi}_{r_{k_j},\theta}^{t}(v - u)d\mu_V(u)\leq M_V r_{k_j}^{t+\varepsilon}r_{k_j}^{-\varepsilon} =M_V \cdot r_{k_j}^{t}.$$

By lemma \ref{primerlema}, we conclude that  
$$t \leq \udim \mu_V ,$$ for almost all $V$ and the result follows by letting $t\to t'$.
\end{proof}

\begin{remark}

A similar statement can be proven for the lower intermediate dimensions. For this, it is useful to remember observation \ref{tecnicismolower}.

\end{remark}









With these results in mind, we can state the analogue of Theorem 5.1(\cite{projections}) for measures:

\begin{theorem}\label{AAA}
Let $\mu$ be a  finite Borel measure supported on a compact subset of $\Rn$. Let $m\in\{1,...,n-1\}$ and suppose that $\dim_A \mu <m$. Then
$$\udim \mu_V\leq \left\{\text{ the unique $s\in [0,m]$ such that $\limsup_{r\to 0}\frac{\log C_{r,\theta}^{s,m}(\mu)}{-\log r}-s=0$} \right\}=\udim\mu$$ 
with an equality for $\gamma_{n,m}-$ almost all $V\in G(n,m).$ 
\end{theorem}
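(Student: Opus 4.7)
The plan is to decompose the statement into three parts and match each to a previously established tool: Theorem~\ref{teocap} for the identification of the capacity threshold with $\udim\mu$, the Corollary immediately preceding this theorem for the deterministic upper bound, and Lemma~\ref{lemadelctp} for the almost-sure lower bound.

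For the deterministic half, since $P_V$ is $1$-Lipschitz, the Corollary gives $\udim\mu_V \leq \udim\mu$ for every $V \in G(n,m)$. Assuming $\udim\mu > 0$, Theorem~\ref{teocap} applies (its hypotheses $\udim\mu > 0$ and $m > \dim_A\mu$ are both in force) and identifies $\udim\mu$ with the unique $s_0 \in [0,m]$ at which $\limsup_{r\to 0}\frac{\log C_{r,\theta}^{s,m}(\mu)}{-\log r} - s$ vanishes. Chaining these gives $\udim\mu_V \leq s_0 = \udim\mu$ for every $V$, which is the first half of the statement.

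For the almost-sure equality, I invoke Remark~\ref{remarkexist}: the function $s \mapsto \limsup_{r\to 0}\frac{\log C_{r,\theta}^{s,m}(\mu)}{-\log r} - s$ is strictly decreasing and vanishes at $s_0$, so every $t < s_0$ satisfies
$$\limsup_{r\to 0}\frac{\log C_{r,\theta}^{t,m}(\mu)}{-\log r} - t > 0,$$
which is precisely the hypothesis of Lemma~\ref{lemadelctp}. The lemma yields a $\gamma_{n,m}$-null set $N_t$ with $\udim\mu_V \geq t$ for $V \notin N_t$. Picking rationals $t_k \nearrow s_0$ and setting $N := \bigcup_k N_{t_k}$, which is still null, I obtain $\udim\mu_V \geq s_0$ for every $V \notin N$; combined with the deterministic upper bound, $\udim\mu_V = s_0 = \udim\mu$ for $\gamma_{n,m}$-almost every $V$.

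The degenerate case $\udim\mu = 0$ (not covered by the hypothesis of Theorem~\ref{teocap}) must be handled separately, but is trivial: the Corollary already forces $\udim\mu_V = 0$ for every $V$, and one sees $s_0 = 0$ by contradiction --- were $s_0 > 0$, applying Lemma~\ref{lemadelctp} with any $t \in (0,s_0)$ would produce a positive-measure set of $V$ with $\udim\mu_V \geq t > 0$. Essentially all the real work is already absorbed into Theorem~\ref{teocap} and Lemma~\ref{lemadelctp}; the only minor obstacle here is the standard quantifier manipulation of passing from ``$\udim\mu_V \geq t$ for every $t < s_0$'' to ``$\udim\mu_V \geq s_0$'' through a countable union of null sets.
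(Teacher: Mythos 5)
Your proof is correct and follows the same route as the paper, which simply states that Theorem~\ref{AAA} is ``immediately deduced combining Theorem~\ref{teocap} and Lemma~\ref{lemadelctp}.'' You have usefully made explicit what the paper leaves implicit — invoking the Corollary for the deterministic bound, passing to the limit $t_k \nearrow s_0$ via a countable union of null sets, and handling the degenerate case $\udim\mu = 0$ that falls outside the hypothesis of Theorem~\ref{teocap} — but the underlying decomposition and key lemmas are identical.
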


\begin{proof}

The theorem can be immediately deduced combining Theorem \ref{teocap} and Lemma \ref{lemadelctp}.
\end{proof}

\begin{remark}\label{AAAL}
Given that Theorem \ref{teocap} and Lemma \ref{lemadelctp} remain true for the lower intermediate dimensions, Thorem \ref{AAA} is also true.
\end{remark}

In order to motivate the main result from section \ref{section6}, we note the following:

\begin{corollary}\label{ellim}
Let \( \mu \) be a finite Borel measure with compact support in \( \mathbb{R}^n \), such that for some $m\in \{1, \dots n\}$ , $\dim_A \mu < m.$ Then, for almost every \( V \in G(n,m) \),
\[
\lim_{\theta \to 0} \underline{\dim}_\theta \mu_V
=  \lim_{\theta \to 0} \underline{\dim}_\theta \mu.
\]

\end{corollary}

\begin{remark}  
In \cite[Corollary 3.7]{NicoUrsula}, the authors prove that for a compact set \( E \subset \mathbb{R}^n \), if \( m \geq \dim_{qA} E \), then \( \dim_\theta E = \dim_\theta P_V E \) for \( \gamma_{n,m} \)-almost every \( V \in G(n,m) \), where $\dim_{qA}$ means \textit{quasi Assouad dimension} (see \cite{fraser2018assouadspectrumquasiassouaddimension} and \cite{hare2018quasidoublingselfsimilarmeasuresoverlaps}). Based on this result, an interesting question arises: can we replace \( \dim_A \) with \( \dim_{qA} \) in Theorem \ref{AAA}?  
\end{remark}

\section{Limit behavior and projections}\label{section6}

To conclude our research, we will prove a theorem that relates the Assouad dimensions of projections of a measure and the limit $\underset{\te \rightarrow 0}{\lim}\ldim \mu$ (remember that, if the dimension function $\te \rightarrow \ldim\mu$ is continuous, then this limit is simply $\dim_H^{*}\mu$). These results are the measure version of a result proven in \cite{NicoUrsula}.

\begin{lemma}\label{lem1}
Let \( r > 0 \) and \( s \leq m \). The function \( f(\theta) := C_{r,\theta}^{m,s}(\mu) \) is non-decreasing.
\end{lemma}

\begin{proof}
The result follows directly from the observation that the kernels \( \phi_{r,\theta}^{m,s}(x) \) are decreasing functions of \( \theta \).
\end{proof}

\begin{theorem}\label{theo222}
Let \( \mu \) be a finite Borel measure with compact support in \( \mathbb{R}^n \), and suppose that \( \dim_A \mu < \infty \). 
Then for almost every \( V \in G(n,m) \),
\[\min\{m,\lim_{\theta \to 0} \underline{\dim}_\theta \mu\}\leq
\lim_{\theta \to 0} \underline{\dim}_\theta \mu_V
\leq  \lim_{\theta \to 0} \underline{\dim}_\theta \mu.
\]

\end{theorem}

\begin{proof}

The inequality
\[
\lim_{\theta \to 0} \underline{\dim}_\theta \mu_V
\leq \lim_{\theta \to 0} \underline{\dim}_\theta \mu
\]
is straightforward. We proceed to prove the left-hand inequality.

Let \( t > \dim_A \mu \) and \( s < \min\left\{ \displaystyle\lim_{\theta\to 0}\underline{\dim}_\theta \mu,\, m \right\} \).  
By examining the kernel \( \phi_{r,\theta}^{s,m} \) separately on the intervals \( [0, r^\theta] \) and \( (r^\theta, \infty) \), it is not hard to see that
\begin{equation}\label{ineq1}
\phi_{r,\theta}^{s,m}(x) \leq \phi_{r,\theta}^{s,t}(x) + \mu(\mathbb{R}^n)\, r^{s(1-\theta)}.
\end{equation}

Now, since \( s < \underline{\dim}_\theta \mu \) for all $\theta$ sufficiently small, and \( \dim_A \mu < t \), we have
\[
\liminf_{r \to 0} \frac{\log C_{r,\theta}^{s,t}(\mu)}{-\log r} - s > 0,
\]

for that $\theta$. Therefore, for each such $\theta$ there exist \( r_0 > 0 \) and a constant \( c > 1 \) such that, for all \( r \in (0, r_0) \), there exists a point \( x_r \in \operatorname{supp}(\mu) \) satisfying
\begin{equation}\label{ineq2}
    \int \phi_{r,\theta}^{s,t}(x_r - y) \, d\mu(y) \leq c\, r^s.
\end{equation}

Combining inequalities \eqref{ineq1} and \eqref{ineq2}, we obtain
\[
C_{r,\theta}^{s,m}(\mu) \leq C\, r^{s(1-\theta)},
\]
for some constant \( C > 0 \), and thus
\[
\lim_{\theta \to 0} \liminf_{r \to 0} \left( \frac{\log C_{r,\theta}^{s,m}(\mu)}{-\log r} - s \right) \geq 0.
\]

Moreover, since, by Lemma~\ref{lem1}, the function
\[
\theta \mapsto \liminf_{r \to 0} \left( \frac{\log C_{r,\theta}^{s,m}(\mu)}{-\log r} - s \right)
\]
is non-decreasing, we conclude that
\[
\liminf_{r \to 0} \left( \frac{\log C_{r,\theta}^{s,m}(\mu)}{-\log r} - s \right) \geq 0,
\]
for all \( \theta \) sufficiently small. Therefore, for each such \( \theta \), we have \( \underline{\dim}_\theta \mu_V \geq s \) for almost every \( V \in G(n,m) \), by Lemma \ref{lemadelctp}.

 Now let \( \{ \theta_n \} \) be a decreasing sequence tending to zero, and define
\[
V_i := \left\{ V \in G(n,m) : \underline{\dim}_{\theta_i} \mu_V \geq s \right\},
\quad \text{and} \quad A_k := \bigcap_{i=1}^{k} V_i.
\]
Then \( \gamma_{n,m}\left( \bigcap_{k=1}^\infty A_k \right) = 1 \), and for all \( V \in \bigcap_{k=1}^\infty A_k \), we have
\[
s \leq \lim_{\theta \to 0} \underline{\dim}_\theta \mu_V.
\]
The result follows by letting \( s \to \min\left\{ m,\, \lim_{\theta \to 0} \underline{\dim}_\theta \mu \right\} \).

\end{proof}
\begin{remark}
The same result the upper intermediate dimension is proven similarly.
\end{remark}

By the proof of Theorem~\ref{theo222}, we obtain the interesting conclusion that the Assouad dimensions of the pushforward measures are closely related to the behavior of the $\theta$-intermediate dimensions of the original measure. 

\begin{corollary} Let $\mu$ be a finite Borel measure with compact support in $\mathbb{R}^n$, and suppose that $\dim_A \mu<\infty$.
    Then 
    \begin{itemize}
        \item If $\overline{\dim}_B \mu_V=\dim_A \mu_V$
    for $\gamma_{n,m}$ almost all $V\in G(n,m)$, then $$\lim_{\theta\to 0}\overline{\dim}_\theta \mu\geq \dim_A \mu_V$$ for $\gamma_{n,m}$ almost all $V\in G(n,m)$.\\
    \item If $\lim_{\theta\to 0}\overline{\dim}_\theta \mu\geq \dim_A \mu_V$ for $\gamma_{n,m}$ almost all $V\in G(n,m)$ then 
    $$\min\{m,\dim_A \mu_V\}\leq\overline{\dim}_B \mu_V\leq\dim_A \mu_V$$
    for $\gamma_{n,m}$ almost all $V\in G(n,m)$.
    \end{itemize}

\end{corollary} \begin{proof}
    The corollary follows from theorem \ref{theo222} and the fact that if $\overline{\dim}_B \mu_V=\dim_A \mu_V$ then $\overline{\dim}_\theta \mu_V =\dim_A\mu_V$ for all $\theta$ (see Corollary \ref{cordegeq}).
\end{proof}



\noindent
{\bf Acknowledgments}:
We thank the anonymous referee for their valuable comments and suggestions, which helped improve the final version.

The research of the authors is partially supported by Grants
PICT 2022-4875 (ANPCyT),
PIP 202287/22 (CONICET), and
UBACyT 2022-154 (UBA). Nicol\'as Angelini is also partially supported by PROICO 3-0720 ``An\'alisis Real y Funcional. Ec. Diferenciales''.

\bibliographystyle{plain}
\bibliography{ref}

\end{document}